\renewcommand{\b}[1]{\boldsymbol{#1}}
\newcommand{\bfx}{\b{x}}
\newcommand{\bA}{\boldsymbol{A}}
\newcommand{\bM}{\boldsymbol{M}}
\newcommand{\bN}{\boldsymbol{N}}
\newcommand{\bn}{\b{n}}
\newcommand{\bsigma}{\boldsymbol{\sigma}}
\newcommand{\bw}{\boldsymbol{w}}
\newcommand{\bW}{\boldsymbol{W}}
\newcommand{\bWh}{\boldsymbol{W}_{\!h}}
\newcommand{\bWz}{\boldsymbol{W}_{\!\bz}}
\newcommand{\bz}{{\boldsymbol{z}}}
\newcommand{\by}{{\boldsymbol{y}}}
\newcommand{\cA}{\mathcal{A}}
\newcommand{\cB}{\mathcal{B}}
\newcommand{\cEN}{\mathcal{E}^{\mathrm{N}}}
\newcommand{\cN}{\mathcal{N}}
\newcommand{\cT}{\mathcal{T}}
\newcommand{\ddiv}{\operatorname{div}}
\newcommand{\GammaD}{{\Gamma_{\mathrm{D}}}}
\newcommand{\GammaEbz}{{\Gamma^{\mathrm{E}}_\bz}}
\newcommand{\GammaN}{{\Gamma_{\mathrm{N}}}}
\newcommand{\GammaNbz}{{\Gamma^{\mathrm{N}}_\bz}}
\newcommand{\GammaNp}{{\Gamma_{\mathrm{N}+}}}
\newcommand{\GammaNpbz}{{\Gamma^{\mathrm{N}+}_\bz}}
\newcommand{\GammaNz}{{\Gamma_{\mathrm{N}0}}}
\newcommand{\GammaNzbz}{{\Gamma^{\mathrm{N}0}_\bz}}
\newcommand{\hbu}{\hat{\b{u}}}
\newcommand{\hbv}{\hat{\b{v}}}
\newcommand{\hbw}{\hat{\b{w}}}
\newcommand{\Hdiv}[1][\Omega]{\boldsymbol{H}(\ddiv,#1)}
\newcommand{\hu}{\hat{u}}
\newcommand{\hv}{\hat{v}}
\newcommand{\hw}{\hat{w}}
\newcommand{\norm}[1]{\left\|#1\right\|}
\newcommand{\oGammaD}{\overline\Gamma_{\mathrm{D}}}
\newcommand{\oGammaN}{\overline\Gamma_{\mathrm{N}}}
\newcommand{\R}{\mathbb{R}}
\newcommand{\rmN}{{\mathrm{N}}}
\newcommand{\RT}{\boldsymbol{\mathrm{RT}}}
\newcommand{\tbsigma}{\boldsymbol{\tilde\sigma}}
\newcommand{\tbWh}{\widetilde{\boldsymbol{W}}_{\!h}}
\newcommand{\tbWz}{\widetilde{\boldsymbol{W}}_{\!\bz}}
\newtheorem{theorem}{Theorem}[section]
\newtheorem{lemma}[theorem]{Lemma}
\begin{document}

\title{Flux reconstructions in the Lehmann--Goerisch method for lower bounds on eigenvalues}
 
\author{Tom\'a\v{s} Vejchodsk\'y\\[2mm]
\parbox{\textwidth}{
\begin{center}
Institute of Mathematics, Czech Academy of Sciences\\
\v{Z}itn\'a 25, Praha 1, CZ-115\,67, Czech Republic\\
vejchod@math.cas.cz
\end{center}
}
}


\maketitle




%
%

\begin{abstract}
The standard application of the Lehmann--Goerisch method for lower bounds on eigenvalues of symmetric elliptic second-order partial differential operators relies on determination of fluxes $\tbsigma_i$ that approximate co-gradients of exact eigenfunctions scaled by corresponding eigenvalues. 
Fluxes $\tbsigma_i$ are usually computed by a global saddle point problem solved by mixed finite element methods. In this paper we propose a simpler global problem that yields fluxes $\tbsigma_i$ of the same quality. The simplified problem is smaller, it is positive definite, and any $\Hdiv$ conforming finite elements, such as Raviart--Thomas elements, can be used for its solution. 
In addition, these global problems can be split into a number of independent local problems on patches, which allows for trivial parallelization. The computational performance of these approaches is illustrated by numerical examples for Laplace and Steklov type eigenvalue problems.
These examples also show that local flux reconstructions enable to compute lower bounds on eigenvalues on considerably finer meshes than the traditional global reconstructions.
\end{abstract}

\noindent{\bfseries Keywords:}
eigenproblem, guaranteed, symmetric, elliptic operators, finite element method, conforming
\noindent{\bfseries MSC:}
65N25, 65N30, 65N15

%
%

\section{Introduction}

Methods for lower bounds on eigenvalues of symmetric elliptic partial differential operators attract growing attention in the last years \cite{Barrenechea2014,CanDusMadStaVoh2017,CarGed2014,CarGal2014,GruOva2009,HuHuaLin2014,HuHuaShe2015,KuzRep2013,LiLinXie2013,LiuOis2013,Liu2015,LuoLinXie:2012,YanZhaLin:2010}.
The Lehmann--Goerisch method stems from a long history of development \cite{Temple1928,Weinstein1937,Kato1949} and it is one of the most advanced methods. It is based on the Lehmann method \cite{Lehmann1949,Lehmann1950} and the $(\b{X},\cB,T)$ concept of Goerisch \cite{GoeHau1985}. Practically, this method relies on conforming approximations of eigenfunctions of interest, subsequent flux reconstructions, and an \emph{a priori} known (rough) lower bound of certain eigenvalue.
In this paper we concentrate on flux reconstructions that approximate co-gradients of approximate eigenfunctions scaled by corresponding eigenvalues.

From the computational point of view, the flux reconstruction is usually obtained by solving a global saddle point problem \cite{BehMerPluWie2000}. This problem is considerably larger than the original eigenvalue problem, its saddle point structure brings technical difficulties, and for large problems it is a bottleneck of this approach.

Therefore, we propose to reconstruct the fluxes by solving a smaller and simpler problem. The simpler problem provides the flux reconstruction of the same quality and in addition it is positive definite. Thus, it can be solved by any $\Hdiv$ conforming finite elements as opposed to the original saddle point problem, where a suitable mixed finite element method has to be employed. Despite these advantages, even the simpler problem for fluxes is considerably larger than the eigenvalue problem itself. Therefore, we utilize the idea of \cite{BraSch:2008,DolErnVoh2016,ErnVoh2013} and propose localized versions of both the saddle point and simpler problems. Localized versions are based on solving independent small local problems on patches of elements and their accuracy is competitive with global problems. The main advantage of the localized problems lies in the fact that they are independent and can be solved in parallel. Their memory requirements are low and they enable to compute lower bounds on eigenvalues for considerably finer meshes than the traditional global flux reconstructions.

The main goal of this paper is to provide the flux reconstruction procedures for a general eigenvalue problem:
find $\lambda_i > 0$ and $u_i\neq 0$ such that
\begin{alignat}{2}
\nonumber
  -\ddiv( \cA \nabla u_i ) + c u_i &= \lambda_i \beta_1 u_i &\quad &\text{in }\Omega, \\
\label{eq:EPstrong}
  (\cA \nabla u_i) \cdot \bn_\Omega + \alpha u_i &= \lambda_i \beta_2 u_i &\quad &\text{on }\GammaN, \\
\nonumber
  u_i &= 0 &\quad &\text{on }\GammaD,
\end{alignat}
where $\Omega \subset \R^d$ is an open  Lipschitz domain, $d$ a dimension, $\GammaD$ and $\GammaN$ are two relatively open components of $\partial\Omega$ such that $\oGammaD \cup \oGammaN = \partial\Omega$ and $\GammaD \cap \GammaN = \emptyset$, and $\bn_\Omega$ is the unit outward facing normal vector to the boundary $\partial\Omega$. 
Note that specific choices of parameters in problem \eqref{eq:EPstrong} yield to the standard eigenvalue problems such as the Laplace eigenvalue problem and Steklov eigenvalue problem.

However, in order to explain the main idea without technicalities, we first consider the Laplace eigenvalue problem, see Sections~\ref{se:LGLap}--\ref{se:simpleLap}. 
The following sections deal with the general eigenvalue problem. 
Section~\ref{se:EP}, in particular,  shifts the eigenvalue problem \eqref{eq:EPstrong} and briefly presents its well-posedness and finite element discretization. 
Section~\ref{se:LG} introduces the Lehmann--Goerisch method and the global mixed finite element problem for the flux reconstruction. 
Section~\ref{se:simplified} analyses the Lehmann--Goerisch method and derives the simplified global problem for the flux reconstruction. 
Section~\ref{se:local} presents local versions of these global problems and transforms them to a series of independent problems on patches of elements.
Sections~\ref{se:numex}--\ref{se:Steklov} compare the accuracy and computational performance of the global and local flux reconstructions for the Laplace and Steklov-type eigenvalue problem on a dumbbell shaped domain.
Finally, Section~\ref{se:concl} draws conclusions.

\section{The Lehmann--Goerisch method for Laplace eigenvalue problem}
\label{se:LGLap}
We first describe how to obtain lower bounds on eigenvalues by the Lehmann--Goerisch method for the special case of the Laplace eigenvalue problem.
We seek eigenvalues $\lambda_i > 0$ and eigenfunctions $u_i \neq 0$ such that
\begin{align}
  \label{eq:Laplace}
  - \Delta u_i  &= \lambda_i u_i \quad \text{in }\Omega, 
\\ \nonumber
  u_i &= 0 \quad \text{on }\partial\Omega.
\end{align}
The weak formulation of this problem is posed in the Sobolev space $V = H^1_0(\Omega)$ consisting of $H^1(\Omega)$ functions with vanishing traces on $\partial\Omega$ and reads as follows: find eigenvalues $\lambda_i > 0$ and eigenfunctions $u_i \in V \setminus\{0\}$ such that
\begin{equation}
  \label{eq:Laplaceweak}
  (\nabla u_i, \nabla v) = \lambda_i (u_i, v) \quad \forall v \in V,
\end{equation}
where $(\cdot,\cdot)$ stands for the $L^2(\Omega)$ inner product.
This problem is well posed and posses a countable sequence of eigenvalues $0 < \lambda_1 \leq \lambda_2 \leq \cdots$, see e.g. \cite{BabOsb:1991,Boffi:2010}.

In order to discretize problem \eqref{eq:Laplaceweak} by the standard conforming finite element method, we consider $\Omega$ to be a polytope. We introduce a standard simplicial mesh $\cT_h$ in $\Omega$ and define the lowest-order finite element space
\begin{equation}
 \label{eq:defVh}
  V_h = \{ v_h \in V : v_h|_K \in P_1(K) \quad \forall K \in \cT_h \},
\end{equation}
where $P_1(K)$ is the space of affine functions on the simplex $K$.
The finite element approximation of problem \eqref{eq:Laplaceweak} corresponds to the finite dimensional problem of seeking eigenvalues $\Lambda_{h,i} \in \R$ and eigenfunctions $u_{h,i} \in V_h \setminus\{0\}$ such that
\begin{equation}
 \label{eq:Lapfem}
  (\nabla u_{h,i}, \nabla v_h ) = \Lambda_{h,i} (u_{h,i}, v_h) \quad \forall v_h \in V_h.
\end{equation}
Discrete eigenvalues are naturally sorted in ascending order: $0 < \Lambda_{h,1} \leq \Lambda_{h,2} \leq \cdots \leq \Lambda_{h,N}$,
where $N = \operatorname{dim} V_h$.

It is well known that the order of convergence of the finite element approximation $\Lambda_{h,i}$ is quadratic \cite{BabOsb:1991,Boffi:2010} and that $\Lambda_{h,i}$ approximates $\lambda_i$ from above. The Lehmann--Goerisch method enables to compute approximations of $\lambda_i$ from below with the same order of convergence. 
The idea of this method is summarized in \cite[Theorem 2.1]{BehMerPluWie2000}. For the readers' convenience we recall this theorem here.
Note that $\bW = \Hdiv$ denotes the standard space of square integrable vector fields with square integrable divergence.
\begin{theorem}[Behnke, Mertins, Plum, Wieners]\label{th:BMPWorig}
Let $\tilde u_i \in V $, $\tbsigma_i \in \bW$, $i=1,2,\dots,n$, and $\rho >0$, $\gamma > 0$ be arbitrary.
Define matrices $\bM, \bN \in \R^{n \times n}$ with entries
\begin{align*}
  \bM_{ij} &= (\nabla \tilde u_i, \nabla \tilde u_j) + (\gamma - \rho) (\tilde u_i, \tilde u_j), \\
  \bN_{ij} &= (\nabla \tilde u_i, \nabla \tilde u_j) + (\gamma - 2\rho) (\tilde u_i, \tilde u_j)
           + \rho^2 (\tbsigma_i,\tbsigma_j) 
           + (\rho^2/\gamma) (\tilde u_i + \ddiv\tbsigma_i,\tilde u_j + \ddiv\tbsigma_j).
\end{align*}
Suppose that the matrix $\bN$ is positive definite and that
$$
  \mu_1 \leq \mu_2 \leq \dots \leq \mu_n
$$
are eigenvalues of the generalized eigenvalue problem 
\begin{equation}
  \label{eq:MNproblem}
  \bM \by_i = \mu_i \bN \by_i, \quad i=1,2,\dots,n.
\end{equation}
Then, for all $i$ such that $\mu_i < 0$, the interval 
$$
  [ \rho - \gamma - \rho/(1-\mu_i), \rho - \gamma)
$$
contains at least $i$ eigenvalues of the continuous problem \eqref{eq:Laplace}.
\end{theorem}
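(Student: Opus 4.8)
The plan is to recast the statement as a spectral comparison for the Dirichlet Laplacian $-\Delta$ on $\Omega$, whose eigenpairs $(\lambda_k,u_k)$ (with $u_k$ orthonormal in $L^2$) simultaneously diagonalize the forms entering $\bM$ and $\bN$. Using $(\nabla u,\nabla v)=(-\Delta u,v)$ in the weak sense, I read $\bM$ as the Gram matrix of the form $\bigl((-\Delta+\gamma-\rho)\cdot,\cdot\bigr)$ evaluated on the $\tilde u_i$, and I split $\bN$ as $\bigl((-\Delta+\gamma-2\rho)\cdot,\cdot\bigr)$ plus the flux contribution $\rho^2\bigl[(\tbsigma_i,\tbsigma_j)+\gamma^{-1}(r_i,r_j)\bigr]$ with $r_i=\tilde u_i+\ddiv\tbsigma_i$. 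The whole theorem then follows by comparing the pencil $(\bM,\bN)$ with the spectrum of $-\Delta$ (which is discrete and bounded below by $\lambda_1>0>-\gamma$, so the rational symbol used below has no pole on the spectrum).

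The conceptual heart, which I would isolate as a lemma, is a complementary-energy inequality that makes \emph{arbitrary} fluxes admissible: for every $u\in L^2(\Omega)$ and every $\sigma\in\bW$,
\[
\norm{\sigma}^2+\gamma^{-1}\norm{u+\ddiv\sigma}^2\ \ge\ \bigl((-\Delta+\gamma)^{-1}u,\,u\bigr).
\]
I would prove this by minimizing the left-hand side over $\sigma\in\bW$: the convex functional is coercive in the $\bW$-norm, its Euler--Lagrange equation forces $w:=u+\ddiv\sigma\in V$ to solve $(-\Delta+\gamma)w=\gamma u$, and the minimal value evaluates to exactly $\bigl((-\Delta+\gamma)^{-1}u,u\bigr)$. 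Applying this to $u=\sum_k(\by)_k\tilde u_k$ and $\sigma=\sum_k(\by)_k\tbsigma_k$ shows, as quadratic forms, $\bN\ge\bN^{\star}$, where $\bN^{\star}$ replaces the flux block by $\rho^2\bigl((-\Delta+\gamma)^{-1}\tilde u_i,\tilde u_j\bigr)$. The key point is that this holds for all fluxes, with equality only for the exact shifted co-gradient; this is precisely what turns the reconstruction into a guaranteed, one-sided bound.

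Next I would reduce everything to an operator inequality on a fixed $i$-dimensional space. Let $\by_1,\dots,\by_i$ span the generalized eigenvectors for $\mu_1\le\cdots\le\mu_i<0$. On their span the Rayleigh quotient of $(\bM,\bN)$ is $\le\mu_i$, so $\by^{T}\bM\by\le\mu_i\,\by^{T}\bN\by$; since $\mu_i<0$ and $\bN\ge\bN^{\star}>0$, multiplying the form inequality by the negative number $\mu_i$ reverses it and yields $\by^{T}\bM\by\le\mu_i\,\by^{T}\bN^{\star}\by$. Translating into the operator, with $u=\sum_k(\by)_k\tilde u_k$, this reads $\bigl(g(-\Delta)u,u\bigr)\le0$ for the rational function $g(\lambda)=(\lambda+\gamma-\rho)-\mu_i\bigl[(\lambda+\gamma-2\rho)+\rho^2/(\lambda+\gamma)\bigr]$. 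Because $\by^{T}\bM\by<0$ on this span, the map $\by\mapsto u$ is injective there, so the corresponding $u$ form a genuine $i$-dimensional subspace.

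Finally I would finish by a min--max count. Factoring $g(\lambda)=(s-\rho)\bigl[(1-\mu_i)s+\mu_i\rho\bigr]/s$ with $s=\lambda+\gamma$, the two roots are $\lambda=\rho-\gamma$ and $\lambda=\rho-\gamma-\rho/(1-\mu_i)$, and $g(\lambda)\le0$ exactly on the closed interval between them. Since $g(-\Delta)$ is non-positive on an $i$-dimensional subspace, the standard Courant--Fischer argument applies: were there fewer than $i$ eigenvalues with $g(\lambda_k)\le0$, the $i$-dimensional subspace would meet the strictly-positive spectral subspace of $g(-\Delta)$, contradicting non-positivity. Hence $-\Delta$ has at least $i$ eigenvalues in the stated interval, the open endpoint at $\rho-\gamma$ being handled by the strict sign $\mu_i<0$. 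I expect the main obstacle to be the complementary-energy lemma of the second paragraph---identifying the correct shifted resolvent $(-\Delta+\gamma)^{-1}$ and verifying the bound for all $\sigma$; once that is in hand, the pencil comparison and the eigenvalue count are essentially algebra plus one classical min--max lemma.
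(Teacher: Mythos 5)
Your argument is essentially correct, but it is a genuinely different route from the paper's. The paper does not prove this theorem directly at all: it recalls it from Behnke--Mertins--Plum--Wieners and, for the generalized version (Theorem~\ref{th:BMPW}), verifies the hypotheses of Goerisch's abstract $(\b{X},\cB,T)$ theorem by exhibiting the space $\b{X}$, the form $\cB$, the operator $T$, and the vectors $\hbw_i$ built from $\tbsigma_i$. You instead give a self-contained proof: your complementary-energy lemma $\norm{\sigma}^2+\gamma^{-1}\norm{u+\ddiv\sigma}^2\geq((-\Delta+\gamma)^{-1}u,u)$ is the concrete Laplace-case statement of the abstract mechanism that makes the Goerisch matrix dominate the optimal Lehmann matrix $\bA_2$ (the paper's Pythagorean identity \eqref{eq:pythagoras} is exactly this fact in disguise, though the paper uses it only heuristically to motivate the simplified reconstruction, never as part of a proof). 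What your approach buys is transparency: it makes explicit why \emph{arbitrary} fluxes yield guaranteed one-sided bounds (minimality of the complementary functional plus the sign flip from $\mu_i<0$), and it reduces the eigenvalue count to an elementary Courant--Fischer argument for the rational symbol $g$. What the paper's route buys is generality: the $(\b{X},\cB,T)$ framework absorbs variable coefficients and the Steklov boundary terms without redoing the spectral calculus. The one place your sketch is thin is the exclusion of the right endpoint $\rho-\gamma$: since $g$ vanishes there, the bare count only gives $i$ eigenvalues in the \emph{closed} interval. Your pointer to $\mu_i<0$ is the right fix, but you should spell it out: $\by^{T}\bM\by\leq\mu_i\,\by^{T}\bN\by<0$ on the span, so no nonzero $u$ in your $i$-dimensional subspace is supported on the spectral subspace $\{\lambda_k\geq\rho-\gamma\}$, and combining this with the $g\leq0$ count forces at least $i$ eigenvalues strictly below $\rho-\gamma$.
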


In order to use Theorem~\ref{th:BMPWorig} for obtaining guaranteed lower bounds on eigenvalues, we need to choose a positive value for the shift parameter $\gamma$ and employ an \emph{a~priori} information about the spectrum. Namely, we need to know that
$$
  \rho-\gamma \leq \lambda_L \quad\text{for some index } L \geq 2.
$$ 
Then Theorem~\ref{th:BMPWorig} provides lower bounds
\begin{equation}
\label{eq:llowinc}
\rho - \gamma - \rho/(1-\mu_i) \leq \lambda_{L - i} \quad \forall i=1,2,\dots,\min\{L-1,n\}.
\end{equation}
Thus, the \emph{a~priori} knowledge of a lower bound on at least one exact eigenvalue can be utilized to compute lower bounds on eigenvalues below it. The \emph{a~priori} known lower bound can be relatively rough, but the lower bounds \eqref{eq:llowinc} have the potential to be very accurate.

In numerical examples presented below it is sufficient to obtain the \emph{a~priori} known lower bounds by using the monotonicity principle based on a comparison with a completely solvable problem. In particular, for the Laplace eigenvalue problem in two dimensions we enclose the domain $\Omega$ into a rectangle $\mathcal{R}$. The analytically known eigenvalues for $\mathcal{R}$ are then below the corresponding eigenvalues for $\Omega$. 
In this way rough \emph{a~priori} known lower bounds for all eigenvalues up to an index of interest can be easily computed. 
If these \emph{a~priori} lower bounds are not sufficiently accurate then the homotopy approach \cite{Plum1990,Plum1991} or nonconforming finite elements \cite{CarGal2014,CarGed2014,LiuOis2013,Liu2015} are recommended.

Notice that Theorem~\ref{th:BMPWorig} holds true for arbitrary $\tilde u_i \in V$ and $\tbsigma_i \in \bW$. However, in order to achieve accurate lower bounds and especially the quadratic order of convergence, they have to be chosen such that $\tilde u_i$ approximates $u_i$ and the flux $\tbsigma_i$ approximates the scaled gradient $(\lambda_i+\gamma)^{-1} \nabla u_i$. Concerning $\tilde u_i$, it is natural to choose $\tilde u_i = u_{h,i}$. Fluxes $\tbsigma_i$ can be computed using the complementarity technique \cite{Complement:2010,systemaee:2010}, also known as dual finite elements \cite{HasHla:1976,Hla:1978,HlaKri:1984}, two energies principle \cite{Braess2013}, or complementary variational principle \cite{BehnkeGoerish1994}. Specifically, in \cite{BehMerPluWie2000} it is proposed to solve a global saddle point problem using mixed finite elements. 

In particular, we use the first order Raviart--Thomas elements and the space of piecewise affine and globally discontinuous functions. Let $\RT_1(K) = [P_1(K)]^2 \oplus \bfx P_1(K)$ be the standard local Raviart--Thomas space. Using the same triangulation $\cT_h$ as above, we define spaces
\begin{align}
  \label{eq:defWhLap}
  \bWh &= \left\{ \bsigma_h \in \bW : \bsigma_h|_K \in \RT_1(K) \ \forall K \in \cT_h 
  \right\},
\\  
  \label{eq:defQhLap}
  Q_h &= \{ \varphi_h \in L^2(\Omega) : \varphi_h |_K \in P_1(K) \quad\forall K \in \cT_h \}.
\end{align}
The global saddle point problem then reads: find $(\bsigma_{h,i},q_{h,i}) \in \bWh \times Q_h$ such that
\begin{alignat}{2}
  \label{eq:sigLap1}
  \left(\bsigma_{h,i}, \bw_h\right) + (q_{h,i}, \ddiv \bw_h) &= \left(\frac{\nabla u_{h,i}}{\Lambda_{h,i}+\gamma}, \bw_h \right) &\quad&\forall \bw_h \in \bWh, 
  \\
  \label{eq:sigLap2}
  (\ddiv \bsigma_{h,i}, \varphi_h )
    &=  - \left( \frac{ \Lambda_{h,i} }{\Lambda_{h,i}+\gamma} u_{h,i}, \varphi_h \right) 
  &\quad &\forall \varphi_h \in Q_h,
\end{alignat}
where $\Lambda_{h,i} \in \R$ and $u_{h,i} \in V_h$ are finite element approximations \eqref{eq:Lapfem} of the exact eigenpair.

\section{Simplified and local flux reconstructions for Laplace eigenvalue problem}
\label{se:simpleLap}
The traditional global saddle point problem \eqref{eq:sigLap1}--\eqref{eq:sigLap2} is not the only possibility how to compute quality fluxes.
This section presents three alternative flux reconstructions still in the context of the Laplace eigenvalue problem. 
First we show that the global saddle point problem \eqref{eq:sigLap1}--\eqref{eq:sigLap2} can be replaced by a smaller symmetric positive definite problem by using the penalty method.
The global saddle point problem \eqref{eq:sigLap1}--\eqref{eq:sigLap2} corresponds to the constraint minimization problem: find $\bsigma_{h,i} \in \bWh$
\begin{equation}
  \label{eq:minglobcons}
   \text{minimizing } \norm{ \bsigma_{h,i} - \frac{\nabla u_{h,i}}{\Lambda_{h,i}+\gamma} }_{L^2(\Omega)}^2
   \text{ under the constraint }
   \ddiv \bsigma_{h,i} = -\frac{ \Lambda_{h,i} }{\Lambda_{h,i}+\gamma} u_{h,i}.
\end{equation}
This constraint, however, is not required by Theorem~\ref{th:BMPWorig} and its exact validity is superfluous. Therefore, we remove it and enforce it in a weaker sense by using a penalty parameter. Section~\ref{se:simplified} provides heuristic arguments for choosing the penalty parameter as $1/\gamma$. Thus, instead of the constraint minimization problem \eqref{eq:minglobcons} we propose to solve the following unconstrained minimization problem: find $\bsigma_{h,i} \in \bWh$
\begin{equation}
  \label{eq:minglobuncons}
   \text{minimizing } \norm{ \bsigma_{h,i} - \frac{\nabla u_{h,i}}{\Lambda_{h,i}+\gamma} }_{L^2(\Omega)}^2
   + \frac{1}{\gamma}\norm{\ddiv \bsigma_{h,i} + \frac{ \Lambda_{h,i} }{\Lambda_{h,i}+\gamma} u_{h,i}}_{L^2(\Omega)}^2.
\end{equation}

The Euler--Lagrange equations for this minimization problem read: 
find $\bsigma_{h,i} \in \bWh$ such that
\begin{multline}
\label{eq:sigLapB}
  \left(\bsigma_{h,i}, \bw_h \right) 
  + \frac{1}{\gamma} \left( \ddiv \bsigma_{h,i}, \ddiv \bw_h \right)
\\  
  = 
  \left( \frac{\nabla u_{h,i}}{\Lambda_{h,i}+\gamma}, \bw_h \right)
  - \frac{1}{\gamma} \left( \frac{\Lambda_{h,i}}{\Lambda_{h,i}+\gamma} u_{h,i}, \ddiv \bw_h \right)
\end{multline}
for all $\bw_h \in \bWh$.
This problem is smaller than problem \eqref{eq:sigLap1}--\eqref{eq:sigLap2} and it is positive definite. In spite of that  it is still considerably larger than the original eigenvalue problem \eqref{eq:Lapfem} in terms of degrees of freedom and its solution is is still a bottleneck for large scale computations.

Therefore, we use a partition of unity to localize these global problems and obtain quality flux reconstructions by solving small independent local problems on patches of elements. The main advantage of this localization is that these local problems can be efficiently solved in parallel.
The idea we utilize here comes from \cite{BraSch:2008} and it was worked out for example in \cite{DolErnVoh2016,ErnVoh2013} for boundary value problems.

Let $\cN_h$ denote the set of nodes in the mesh $\cT_h$ and let $\psi_\bz$ be a hat function corresponding to the node $\bz\in\cN_h$, i.e. $\psi_\bz$ is a piecewise linear and continuous function that equals to one at $\bz$ and vanishes at all other nodes of $\cT_h$.
Hat functions $\psi_\bz$ clearly form a partition of unity $\sum_{\bz\in\cN_h} \psi_\bz \equiv 1$ in $\Omega$.
Further, let $\cT_\bz = \{ K \in \cT_h : \bz \in K \}$ be the set of elements sharing vertex~$\bz\in\cN_h$. The interior of the union of all elements $K \in \cT_\bz$ is denoted by $\omega_\bz$ and called a patch. The unit outward facing normal vector to $\partial\omega_\bz$ is denoted by $\bn_\bz$. Note that $\overline\omega_\bz = \operatorname{supp} \psi_\bz$.
Furthermore, let $\GammaEbz$ be the union of those edges on the boundary $\partial\omega_\bz$ that do not contain $\bz$.
Thus, $\GammaEbz = \partial\omega_\bz$ for all interior patches, but not for the boundary patches.

In order to define the localized versions of global problems \eqref{eq:sigLap1}--\eqref{eq:sigLap2} and \eqref{eq:sigLapB}, we introduce the following spaces on patches $\omega_\bz$:
\begin{align}
\nonumber
  \bWz &= \left\{ \bsigma_\bz \in \Hdiv[\omega_\bz] : \bsigma_\bz|_K \in \RT_1(K) \ \forall K \in \cT_\bz \right.
\\ &\quad\hspace{46mm}
\nonumber
  \left. \text{ and } 
    \bsigma_\bz \cdot \bn_\bz = 0 \text{ on edges }E \subset \GammaEbz
  \right\},
\\
\label{eq:defQz}
  Q_\bz &= \{ \varphi_h \in L^2(\omega_\bz) : \varphi_h |_K \in P_1(K) \quad\forall K \in \cT_\bz \}.
\end{align}
Localization of the saddle point problem \eqref{eq:sigLap1}--\eqref{eq:sigLap2} can then be done as follows.
Compute $\bsigma_{h,i} \in \bWh$ as
\begin{equation}
\label{eq:sigLapsum}
  \bsigma_{h,i} = \sum\limits_{\bz\in\cN_h} \bsigma_{\bz,i},
\end{equation}
where each $\bsigma_{\bz,i}$ is determined by solving the following problem: 
find $(\bsigma_{\bz,i},q_{\bz,i}) \in \bWz \times Q_\bz$ such that
\begin{gather}
  \label{eq:sigLap1loc}
  \left(\bsigma_{\bz,i}, \bw_h\right)_{\omega_\bz} + (q_{\bz,i}, \ddiv \bw_h)_{\omega_\bz} = \left(\psi_\bz \frac{\nabla u_{h,i}}{\Lambda_{h,i}+\gamma}, \bw_h\right)_{\omega_\bz} 
  \quad \forall \bw_h \in \bWz, 
\\ \label{eq:sigLap2loc} 
  (\ddiv \bsigma_{\bz,i}, \varphi_h )_{\omega_\bz}
    =  - \left( \frac{\Lambda_{h,i}}{\Lambda_{h,i}+\gamma} \psi_\bz u_{h,i}, \varphi_h\right)_{\omega_\bz} 
  + \left( \frac{\nabla \psi_\bz \cdot \nabla u_{h,i}}{\Lambda_{h,i}+\gamma}, \varphi_h\right)_{\omega_\bz}
  \quad \forall \varphi_h \in Q_\bz.
\end{gather}
Note that the last term on the right-hand side of \eqref{eq:sigLap2loc} has to be added due to solvability of this saddle point problem.
Indeed, for interior and Neumann nodes, equation \eqref{eq:sigLap2loc} tested by $\varphi_h\equiv 1$ is only consistent thanks to this term and identity \eqref{eq:Lapfem}.
Further note that summing equality \eqref{eq:sigLap2loc} over $\bz \in \cN_h$ yields the original equality \eqref{eq:sigLap2}, because the last term in \eqref{eq:sigLap2loc} vanishes.

Alternatively, we can set up local positive definite problems on patches by localizing the positive definite global problem \eqref{eq:sigLapB}.
We seek $\bsigma_{h,i} \in \bWh$
in the form \eqref{eq:sigLapsum}, where $(\bsigma_{\bz,i},q_{\bz,i}) \in \bWz \times Q_\bz$ are such that
\begin{multline}
\label{eq:sigLapBloc}
  \left(\bsigma_{\bz,i}, \bw_h\right)_{\omega_\bz} 
  + \frac{1}{\gamma}\left( \ddiv\bsigma_{\bz,i}, \ddiv \bw_h \right)_{\omega_\bz}  
  = 
  \left( \psi_\bz\frac{\nabla u_{h,i}}{\Lambda_{h,i}+\gamma}, \bw_h \right)_{\omega_\bz}
\\  
  - \frac{1}{\gamma} \left( \frac{\Lambda_{h,i}}{\Lambda_{h,i}+\gamma} \psi_\bz u_{h,i}, \ddiv\bw_h \right)_{\omega_\bz}  
  + \frac{1}{\gamma} \left( \frac{\nabla \psi_\bz \cdot \nabla u_{h,i}}{\Lambda_{h,i}+\gamma}, \ddiv\bw_h\right)_{\omega_\bz}  
\end{multline}
for all $\bw_h \in \bWz$.

It is easy to see that all presented flux reconstructions can be directly used in Theorem~\ref{th:BMPWorig} to compute lower bounds on eigenvalues \eqref{eq:llowinc}. The formal prove of this fact follows as a special case of Lemmas~\ref{le:sigOK}, \ref{le:sigBOK}, \ref{le:locsigcons}, and \ref{le:sigBlocOK} stated below.

\section{General eigenvalue problem and its discretization}
\label{se:EP}
From now on we consider the eigenvalue problem \eqref{eq:EPstrong} and generalize the ideas indicated in the previous two sections. We will provide more details and explain certain relations behind the Lehmann--Goerisch method and the proposed flux reconstructions.

Since the parameter $\gamma > 0$ plays the role of the shift, we start by formulating the shifted version of the eigenvalue problem \eqref{eq:EPstrong}:
\begin{alignat}{2}
\nonumber
  -\ddiv( \cA \nabla u_i ) + (c+\gamma\beta_1) u_i &= (\lambda_i+\gamma) \beta_1 u_i &\quad &\text{in }\Omega, \\
\label{eq:EPstrongsft}
  (\cA \nabla u_i) \cdot \bn_\Omega + (\alpha+\gamma\beta_2) u_i &= (\lambda_i+\gamma) \beta_2 u_i &\quad &\text{on }\GammaN, \\
\nonumber
  u_i &= 0 &\quad &\text{on }\GammaD.
\end{alignat}
In order to solve this problem by the conforming finite element method, we will formulate it in a weak sense.
For this purpose, we assume the diffusion matrix $\cA \in [L^\infty(\Omega)]^{d\times d}$ to be symmetric and uniformly positive definite, i.e. there exists $C>0$ such that
$$
  \b{\xi}^\top \cA(\bfx) \b{\xi} \geq C |\b{\xi}|^2 \quad\text{for all }\b{\xi} \in\R^d \text{ and for almost all } \bfx \in \Omega.
$$
This assumption implies that the inverse matrix $\cA^{-1}(x)$ exists for almost all $\bfx \in \Omega$ and that $\cA^{-1} \in L^\infty(\Omega)^{d\times d}$.
The other coefficients are $c,\beta_1 \in L^\infty(\Omega)$, $\alpha, \beta_2 \in L^\infty(\GammaN)$ and they are all assumed to be nonnegative.

We define the usual space
\begin{equation}
  \label{eq:defV}
  V = \{ v \in H^1(\Omega) : v = 0 \text{ on } \GammaD \},
\end{equation}
and we introduce bilinear forms
\begin{align}
\label{eq:blf}
  a(u,v) &= (\cA \nabla u,\nabla v) + ( [c+\gamma\beta_1] u, v) + ( [\alpha+\gamma\beta_2] u, v)_\GammaN,
\\
\label{eq:blfb}
  b(u,v) &= (\beta_1 u, v) + (\beta_2 u, v)_\GammaN,
\end{align}
where
$(\cdot,\cdot)$ stands for the $L^2(\Omega)$, and
$(\cdot,\cdot)_\GammaN$ for the $L^2(\GammaN)$ inner products.

For the form $b(\cdot,\cdot)$ we assume that at least one of the following two conditions is satisfied:
(a) $\beta_1 > 0$ on a subset of $\Omega$ of positive measure,
(b) $\beta_2 > 0$ on a subset of $\GammaN$ of positive measure.
This assumption guarantees that the eigenvalue problem does not degenerate and posses the countable infinity of eigenvalues.
Since $\gamma > 0$, the bilinear form $a(\cdot,\cdot)$ is $V$-elliptic even if $\GammaD$ is empty, $c=0$ in $\Omega$, and $\alpha=0$ on $\GammaN$. The form $a(\cdot,\cdot)$ induces a norm on $V$ denoted by $\|{\cdot}\|_a$.
The form $b(\cdot,\cdot)$ induces a seminorm on $V$, in general, and we denote it by $|{\cdot}|_b$.

Under these assumptions, the weak formulation of \eqref{eq:EPstrongsft} reads: find $\lambda_i > 0$ and $u_i \in V \setminus \{0\}$ such that
\begin{equation}
\label{eq:EPweak}
  a(u_i,v) = (\lambda_i+\gamma) b(u_i,v) \quad \forall v \in V
\end{equation}
is well posed and eigenvalues form a countable sequence: $0 < \lambda_1 \leq \lambda_2 \leq \cdots$.
This follows from the standard compactness argument \cite{BabOsb:1991,Boffi:2010}, see also \cite{VejSeb2017} for this specific setting.

We discretize problem \eqref{eq:EPweak} in the same way as in \eqref{eq:Lapfem}. 
In particular, we consider the finite element space \eqref{eq:defVh}, now with $V$ given by \eqref{eq:defV},
and define approximate eigenvalues $\Lambda_{h,i} \in \R$ and eigenfunctions $u_{h,i} \in V_h\setminus\{0\}$
such that
\begin{equation}
 \label{eq:fem}
  a( u_{h,i}, v_h) = (\Lambda_{h,i}+\gamma) b(u_{h,i}, v_h) \quad \forall v_h \in V_h.
\end{equation}

\section{The Lehmann--Goerisch method for the general eigenvalue problem}
\label{se:LG}
In this section, we generalize the Lehmann--Goerisch method as it is described in \cite{BehMerPluWie2000} to the problem with variable coefficients \eqref{eq:EPstrong} admitting both the standard and Steklov type eigenvalue problems.
We first formulate and prove the generalization of Theorem~\ref{th:BMPWorig}, see \cite[Theorem 2.1]{BehMerPluWie2000}.

For this purpose we introduce threshold values $c_0 > 0$, $\beta_{1,0} > 0$, $\alpha_0 >0$, and $\beta_{2,0} > 0$ and define sets
\begin{align}
\label{eq:Omegap}
  \Omega_+ &= \{ \bfx \in \Omega : c(\bfx) \geq c_0 \text{ or } \beta_1(\bfx) \geq \beta_{1,0} \},
\\  
\label{eq:GammaNp}
  \GammaNp &= \{ \bfx \in \GammaN : \alpha(\bfx) \geq \alpha_0 \text{ or } \beta_2(\bfx) \geq \beta_{2,0} \}.
\end{align}
We also set $\Omega_0 = \Omega\setminus\Omega_+$ and $\GammaNz = \GammaN\setminus\GammaNp$ 
and recall that $\bW = \Hdiv$.

\begin{theorem}\label{th:BMPW}
Let $\tilde u_i \in V $, $i=1,2,\dots,n$, and $\rho >0$, $\gamma > 0$ be arbitrary.
Let $\tbsigma_i \in \bW$ be such that
\begin{equation}
\label{eq:sigmacond}
  \beta_1 \tilde u_i + \ddiv\tbsigma_i = 0 \text{ in } \Omega_0 
  \quad\text{and}\quad
  \beta_2 \tilde u_i - \tbsigma_i\cdot\bn_\Omega = 0 \text{ on } \GammaNz
\end{equation}
for $i=1,2,\dots,n$.
Define matrices $\bA_0, \bA_1, \bA_2 \in \R^{n \times n}$ with entries
\begin{align}
\nonumber
  \bA_{0,ij} &= a(\tilde u_i, \tilde u_j), \quad
    \bA_{1,ij} = b(\tilde u_i, \tilde u_j), 
\\ \nonumber
  \hat\bA_{2,ij} &= \left(\cA^{-1} \tbsigma_i,\tbsigma_j\right) 
    + \left( \frac{1}{c+\gamma\beta_1} [\beta_1 \tilde u_i + \ddiv \tbsigma_i], \beta_1 \tilde u_j + \ddiv\tbsigma_j \right)_{\Omega_+}
\\ \label{eq:defhatA} 
   &\quad    
    + \left( \frac{1}{\alpha+\gamma\beta_2} [\beta_2 \tilde u_i - \tbsigma_i\cdot\bn_\Omega], \beta_2 \tilde u_j - \tbsigma_j\cdot\bn_\Omega \right)_\GammaNp
\end{align}
and matrices $\bM = \bA_0 - \rho \bA_1$, $\bN = \bA_0 - 2\rho \bA_1 + \rho^2 \hat\bA_2$.
Suppose that the matrix $\bN$ is positive definite and that
$$
  \mu_1 \leq \mu_2 \leq \dots \leq \mu_n
$$
are eigenvalues of the generalized eigenvalue problem 
\begin{equation}
  \label{eq:MNproblem}
  \bM \by_i = \mu_i \bN \by_i, \quad i=1,2,\dots,n.
\end{equation}
Then, for all $i$ such that $\mu_i < 0$, the interval 
$$
  [ \rho - \gamma - \rho/(1-\mu_i), \rho - \gamma)
$$
contains at least $i$ eigenvalues of the continuous problem \eqref{eq:EPstrong}.
\end{theorem}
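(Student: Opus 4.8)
The plan is to reduce the statement to the abstract Lehmann(--Goerisch) mechanism that already underlies Theorem~\ref{th:BMPWorig}, and then supply the one genuinely new ingredient: a computable upper bound for the matrix with entries $a(w_i,w_j)$, where $w_i\in V$ solves $a(w_i,v)=b(\tilde u_i,v)$ for all $v\in V$. This $w_i$ exists and is unique because $a(\cdot,\cdot)$ is $V$-elliptic for $\gamma>0$. Introducing the solution operator $G\colon V\to V$ by $a(Gf,v)=b(f,v)$, one checks that $G$ is self-adjoint, positive and compact with respect to $\|{\cdot}\|_a$ and has eigenvalues $(\lambda_i+\gamma)^{-1}$. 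In these terms $\bA_{0,ij}=a(\tilde u_i,\tilde u_j)$, $\bA_{1,ij}=a(G\tilde u_i,\tilde u_j)=b(\tilde u_i,\tilde u_j)$, and the \emph{ideal} second-moment matrix is $\bA^{\mathrm{ex}}_{2,ij}=a(G\tilde u_i,G\tilde u_j)=a(w_i,w_j)$. With $\bN^{\mathrm{ex}}=\bA_0-2\rho\bA_1+\rho^2\bA^{\mathrm{ex}}_2$, the abstract Lehmann counting argument for $G$ (identical to the one establishing Theorem~\ref{th:BMPWorig}, to which these matrices specialize for the Laplacian) yields the claimed enclosure for the eigenvalues $\mu_i^{\mathrm{ex}}$ of $\bM\by=\mu\bN^{\mathrm{ex}}\by$. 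The whole task is thus to prove $\hat\bA_2\succeq\bA^{\mathrm{ex}}_2$ and to transfer the enclosure from $\mu_i^{\mathrm{ex}}$ to the computable $\mu_i$.

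For the key inequality I would use a complementary-energy (Prager--Synge) identity. Fix a coefficient vector $\by$ and set $w=\sum_i y_iw_i$, $\tilde u=\sum_i y_i\tilde u_i$, $\tbsigma=\sum_i y_i\tbsigma_i$, so that $w$ solves $-\ddiv(\cA\nabla w)+\kappa w=\beta_1\tilde u$ in $\Omega$ and $(\cA\nabla w)\cdot\bn_\Omega+\eta w=\beta_2\tilde u$ on $\GammaN$, writing $\kappa=c+\gamma\beta_1$ and $\eta=\alpha+\gamma\beta_2$. With the flux error $\delta=\cA\nabla w-\tbsigma\in\bW$ and the residuals $r=\beta_1\tilde u+\ddiv\tbsigma$, $s=\beta_2\tilde u-\tbsigma\cdot\bn_\Omega$, one gets $\ddiv\delta=\kappa w-r$ and $\delta\cdot\bn_\Omega=s-\eta w$ on $\GammaN$. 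Substituting $\tbsigma=\cA\nabla w-\delta$, expanding $(\cA^{-1}\tbsigma,\tbsigma)$, integrating $(\nabla w,\delta)$ by parts (the trace on $\GammaD$ vanishes since $w\in V$), and completing squares gives
\begin{multline*}
\by^\top\hat\bA_2\by = \|w\|_a^2 + (\cA^{-1}\delta,\delta) + (\kappa w,w)_{\Omega_0} + (\eta w,w)_{\GammaNz} \\
+ \norm{\sqrt{\kappa}\,w-r/\sqrt{\kappa}}_{L^2(\Omega_+)}^2 + \norm{\sqrt{\eta}\,w-s/\sqrt{\eta}}_{L^2(\GammaNp)}^2 .
\end{multline*}
The completion of squares on $\Omega_+$ and $\GammaNp$ is legitimate because $\kappa\geq\min(c_0,\gamma\beta_{1,0})>0$ and $\eta\geq\min(\alpha_0,\gamma\beta_{2,0})>0$ there, while the potentially indefinite cross terms over $\Omega_0$ and $\GammaNz$ are \emph{absent} precisely because condition \eqref{eq:sigmacond} forces $r=0$ on $\Omega_0$ and $s=0$ on $\GammaNz$. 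Every term on the right except $\|w\|_a^2$ is nonnegative, so $\by^\top\hat\bA_2\by\geq\|w\|_a^2=\by^\top\bA^{\mathrm{ex}}_2\by$, i.e. $\hat\bA_2\succeq\bA^{\mathrm{ex}}_2$.

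Finally I would transfer the enclosure by monotonicity. From $\hat\bA_2\succeq\bA^{\mathrm{ex}}_2$ we get $\bN=\bN^{\mathrm{ex}}+\rho^2(\hat\bA_2-\bA^{\mathrm{ex}}_2)\succeq\bN^{\mathrm{ex}}$. Since $\bN\succ0$ by hypothesis, for any $\by$ with $\by^\top\bM\by<0$ the Rayleigh quotient satisfies $\by^\top\bM\by/\by^\top\bN\by\geq\by^\top\bM\by/\by^\top\bN^{\mathrm{ex}}\by$ (a negative number divided by the larger positive denominator), so Courant--Fischer gives $\mu_i\geq\mu_i^{\mathrm{ex}}$ for all negative eigenvalues; in particular $\mu_i<0$ forces $\mu_i^{\mathrm{ex}}<0$. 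Because $\tau\mapsto\rho-\gamma-\rho/(1-\tau)$ is decreasing on $(-\infty,0)$, the bound $\mu_i\geq\mu_i^{\mathrm{ex}}$ means the computed interval $[\rho-\gamma-\rho/(1-\mu_i),\rho-\gamma)$ \emph{contains} the exact one, which by the abstract step already holds $\geq i$ eigenvalues; hence so does the computed interval. I expect the main obstacle to be the second paragraph: getting the complementary-energy identity exactly right in full generality, i.e. the bookkeeping of the mixed Robin/Neumann boundary terms and variable coefficients, and in particular verifying that \eqref{eq:sigmacond} together with the thresholds $c_0,\beta_{1,0},\alpha_0,\beta_{2,0}$ simultaneously makes the squares well defined on $\Omega_+,\GammaNp$ and eliminates the indefinite contributions on $\Omega_0,\GammaNz$. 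A secondary point to check is that the abstract Lehmann argument carries over verbatim with $a(\cdot,\cdot)$ in place of the $L^2$ inner product, which it does because $G$ is self-adjoint and positive with respect to $\|{\cdot}\|_a$.
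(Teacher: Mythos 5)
Your argument is correct, but it is organized differently from the paper's. The paper proves Theorem~\ref{th:BMPW} in a few lines by invoking the abstract Goerisch theorem \cite[Theorem~5]{BehnkeGoerish1994} as a black box: it constructs the triple $(\b{X},\cB,T)$ with $\b{X}=[L^2(\Omega)]^{d+1}\times L^2(\GammaN)$, $\cB$ as in \eqref{eq:defB}, $T$ as in \eqref{eq:defT}, defines $\hbw_i$ by \eqref{eq:hwiform}, and then only checks the two identities $\cB(\hbw_i,Tv)=b(\tilde u_i,v)$ (divergence theorem plus \eqref{eq:sigmacond}) and $\hat\bA_{2,ij}=\cB(\hbw_i,\hbw_j)$. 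You instead invoke only the bare Lehmann theorem (with the uncomputable $\bA_{2,ij}=a(w_i,w_j)$) and supply the Goerisch step yourself: your Prager--Synge identity is exactly the Pythagorean decomposition $\cB(\hbw,\hbw)=\cB(Tw,Tw)+\cB(\hbw-Tw,\hbw-Tw)$ that is hidden inside \cite[Theorem~5]{BehnkeGoerish1994}, unrolled into explicit integrals, and your eigenvalue-monotonicity transfer from $\bN^{\mathrm{ex}}$ to $\bN$ reproduces the comparison argument that the abstract theorem performs internally. What your route buys is self-containedness and transparency (the reader sees exactly where \eqref{eq:sigmacond} kills the indefinite cross terms on $\Omega_0$ and $\GammaNz$, and why the thresholds $c_0,\beta_{1,0},\alpha_0,\beta_{2,0}$ make the completed squares legitimate); what the paper's route buys is brevity and a clean separation between the abstract mechanism and its concrete instantiation. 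Two small points you should tidy up: (i) in the Courant--Fischer step you divide by $\by^\top\bN^{\mathrm{ex}}\by$, which is only nonnegative a priori; it is in fact positive on the relevant minimizing subspace because $\by^\top\bN^{\mathrm{ex}}\by=\norm{\tilde u-\rho\,G\tilde u}_a^2=0$ would force $\by^\top\bM\by=a(\tilde u-\rho G\tilde u,\tilde u)=0$, contradicting $\by^\top\bM\by<0$ there; (ii) the integration by parts producing the boundary term $(w,\delta\cdot\bn_\Omega)_\GammaN$ tacitly uses that $\tbsigma\cdot\bn_\Omega\in L^2(\GammaN)$, which is already implicit in the definition of $\hat\bA_2$ and in \eqref{eq:sigmacond}, so it is harmless but worth a sentence.
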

\begin{proof}
The proof follows from \cite[Theorem~5]{BehnkeGoerish1994}.
To verify its assumptions, we define the space $\b{X} = [L^2(\Omega)]^{d+1} \times L^2(\GammaN)$. For elements $\hbu = (\hu_1,\dots,\hu_d,\hu_{d+1},\hu_{d+2})^\top \in \b{X}$ we consider notation $\hbu = \left(\hbu^{(d)}, \hu^0, \hu^\rmN\right)^\top$, where $\hbu^{(d)} = (\hu_1,\dots,\hu_d)^\top$ is a vector with $d$ components. Using this notation, we define the bilinear form
\begin{equation}
\label{eq:defB}
  \cB(\hbu,\hbv) = \left(\cA^{-1} \hbu^{(d)}, \hbv^{(d)}\right) + \left( [c+\gamma\beta_1] \hu^0, \hv^0 \right) 
    + \left( [\alpha+\gamma\beta_2] \hu^\rmN, \hv^\rmN \right)_\GammaN
\end{equation}
on $\b{X}$.
We also define the linear operator $T : V \rightarrow \b{X}$ as
\begin{equation}
\label{eq:defT}
  T u = (\cA \nabla u, u, u|_\GammaN)^\top.
\end{equation}  
By this construction we immediately have
\begin{equation}
\label{eq:auvBTuTv}
  a(u,v) = \cB(Tu, Tv) \quad \forall u,v\in V.
\end{equation}

Now, given $\tbsigma_i \in \bW$ satisfying \eqref{eq:sigmacond}, we define $\hbw_i = \left(\hbw_i^{(d)}, \hw_i^0, \hw_i^\rmN\right)^\top \in \b{X}$ as
\begin{align}
\label{eq:hwiform}
\hbw_i^{(d)} &= \tbsigma_i, 
\\ \nonumber
\hw_i^0 &= \left\{ \begin{array}{ll}
  \displaystyle\frac{\beta_1 \tilde u_i + \ddiv \tbsigma_i}{c+\gamma\beta_1} & \text{ in } \Omega_+, \\
  0 & \text{ in } \Omega_0,
  \end{array}\right.
\quad  
\hw_i^\rmN = \left\{ \begin{array}{ll}
  \displaystyle\frac{\beta_2 \tilde u_i - \tbsigma_i\cdot\bn_\Omega}{\alpha+\gamma\beta_2} & \text{ in } \GammaNp, \\
  0 & \text{ in } \GammaNz.
  \end{array}\right.
\end{align}
Using the divergence theorem and condition \eqref{eq:sigmacond}, it is easy to verify that
\begin{equation}
  \label{eq:defhatwi}
  \cB(\hbw_i, Tv) = b(\tilde u_i, v) \quad \forall v \in V.
\end{equation}
Similarly, we easily verify that 
\begin{equation}
  \label{eq:hatAB}
  \hat\bA_{2,ij} = \cB(\hbw_i,\hbw_j) \quad\text{for } i,j=1,2,\dots,n.
\end{equation}  
Thus, all assumptions of \cite[Theorem~5]{BehnkeGoerish1994} are satisfied and the proof is finished.
\end{proof}

Theorem~\ref{th:BMPW} is used for computing lower bounds on eigenvalues by employing an \emph{a~priori} known lower bound on a certain eigenvalue as in \eqref{eq:llowinc}.
We will now present four flux reconstruction procedures in an analogy with those presented in Sections~\ref{se:LGLap}--\ref{se:simpleLap}. However, the general eigenvalue problem \eqref{eq:EPstrong} requires a more involved approach. 

For technical reasons connected with flux reconstruction, we assume coefficients $\cA$, $c$, $\beta_1$, $\alpha$, and $\beta_2$ to be piecewise constant with respect to the mesh $\cT_h$. The constant values of these coefficients will be denoted by $\cA_K$, $c_K$, $\beta_{1K}$, $\alpha_E$, and $\beta_{2E}$ for $K\in\cT_h$ and $E\in\cEN_h$, where $\cEN_h$ stands for the set of all edges in $\cT_h$ lying on $\GammaN$. Consequently, the natural choices of the threshold values in \eqref{eq:Omegap} and \eqref{eq:GammaNp} are $c_0 = \min\{c_K > 0,\ K\in\cT_h\}$, $\beta_{1,0}=\min\{\beta_{1K} > 0,\ K\in\cT_h\}$, $\alpha_0=\min\{\alpha_E > 0,\ E\in\cEN_h\}$,
$\beta_{2,0}=\min\{\beta_{2E} > 0,\ E\in\cEN_h\}$ and the set $\Omega_0$ then consists of those elements $K\in\cT_h$ where both $c_K$ and $\beta_{1K}$ vanish. Similarly, the set $\GammaNz$ consists of those edges $E\in\cEN_h$ where both $\alpha_E$ and $\beta_{2E}$ vanish.

In order to generalize the global saddle point problem \eqref{eq:sigLap1}--\eqref{eq:sigLap2}, we need to enforce suitable values for the normal components of fluxes on the Neumann boundary. 
Therefore, we define spaces
\begin{align*}
  \bWh &= \left\{ \bsigma_h \in \Hdiv : \bsigma_h|_K \in \RT_1(K) \ \forall K \in \cT_h \text{ and }
    \bsigma_h \cdot \bn_\Omega = \frac{\Lambda_{h,i}\beta_2 - \alpha}{\Lambda_{h,i} + \gamma} u_{h,i} \text{ on }\GammaN
  \right\},
\\
  \bWh^0 &= \left\{ \bsigma_h \in \Hdiv : \bsigma_h|_K \in \RT_1(K) \ \forall K \in \cT_h \text{ and } 
    \bsigma_h \cdot \bn_\Omega = 0 \text{ on }\GammaN
  \right\},
\end{align*}
Notice the updated definition of the space $\bWh$ in comparison with \eqref{eq:defWhLap}. The space $Q_h$ will be used in the same form as in \eqref{eq:defQhLap}.

The global saddle point problem for the general eigenvalue problem then reads: find $(\bsigma_{h,i},q_{h,i}) \in \bWh \times Q_h$ such that
\begin{alignat}{2}
  \label{eq:sig1}
  \left(\cA^{-1} \bsigma_{h,i}, \bw_h\right) + (q_{h,i}, \ddiv \bw_h) &= \left(\frac{\nabla u_{h,i}}{\Lambda_{h,i}+\gamma}, \bw_h \right) &\quad&\forall \bw_h \in \bWh^0, 
  \\
  \label{eq:sig2}
  (\ddiv \bsigma_{h,i}, \varphi_h )
    &=  \left( \frac{c - \Lambda_{h,i} \beta_1}{\Lambda_{h,i}+\gamma} u_{h,i}, \varphi_h \right) 
  &\quad &\forall \varphi_h \in Q_h,
\end{alignat}
where $\Lambda_{h,i} \in \R$ and $u_{h,i} \in V_h$ are finite element approximations \eqref{eq:fem} of the exact eigenpair.
The following lemma verifies that this flux reconstruction can be used in Theorem~\ref{th:BMPW} to compute lower bounds on eigenvalues as in \eqref{eq:llowinc}.
\begin{lemma}
\label{le:sigOK}
The flux $\bsigma_{h,i} \in \bWh$ computed by \eqref{eq:sig1}--\eqref{eq:sig2} satisfies all assumptions of Theorem~\ref{th:BMPW}.
\end{lemma}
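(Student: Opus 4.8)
The plan is to check that, with the natural choices $\tilde u_i = u_{h,i}$ and $\tbsigma_i = \bsigma_{h,i}$, the hypotheses that Theorem~\ref{th:BMPW} places on the flux are met. These hypotheses amount to two requirements: that $\bsigma_{h,i} \in \bW$, and that the two identities in \eqref{eq:sigmacond} hold. The first is immediate, since $\bWh \subset \Hdiv = \bW$ by construction. Hence the entire argument reduces to verifying the normal-trace condition on $\GammaNz$ and the divergence condition on $\Omega_0$.

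For the boundary condition I would argue as follows. By the piecewise-constant coefficient assumption and the choice of threshold values, both $\alpha$ and $\beta_2$ vanish on every edge of $\GammaNz$. The membership $\bsigma_{h,i} \in \bWh$ prescribes $\bsigma_{h,i}\cdot\bn_\Omega = (\Lambda_{h,i}\beta_2 - \alpha)(\Lambda_{h,i}+\gamma)^{-1} u_{h,i}$ on all of $\GammaN$, which therefore collapses to $\bsigma_{h,i}\cdot\bn_\Omega = 0$ on $\GammaNz$. Combined with $\beta_2 = 0$ there, this gives $\beta_2 u_{h,i} - \bsigma_{h,i}\cdot\bn_\Omega = 0$ on $\GammaNz$, which is precisely the second identity in \eqref{eq:sigmacond}.

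The divergence condition is slightly more delicate, and this is where I expect the only real (if still routine) obstacle to lie: the weak equation \eqref{eq:sig2} must first be upgraded to a pointwise identity. The key observation is that $\ddiv\bsigma_{h,i}|_K \in P_1(K)$ for the local Raviart--Thomas space, so $\ddiv\bsigma_{h,i} \in Q_h$, while the right-hand side $(c - \Lambda_{h,i}\beta_1)(\Lambda_{h,i}+\gamma)^{-1} u_{h,i}$ also lies in $Q_h$ because $c$ and $\beta_1$ are piecewise constant and $u_{h,i}$ is piecewise affine. Since both sides of \eqref{eq:sig2} belong to $Q_h$ and their $L^2$ pairings against every $\varphi_h \in Q_h$ agree, testing with their difference forces $\ddiv\bsigma_{h,i} = (c - \Lambda_{h,i}\beta_1)(\Lambda_{h,i}+\gamma)^{-1} u_{h,i}$ almost everywhere in $\Omega$. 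Restricting to $\Omega_0$, where $c = 0$ and $\beta_1 = 0$, yields $\ddiv\bsigma_{h,i} = 0$, and since $\beta_1 = 0$ on $\Omega_0$ this is exactly $\beta_1 u_{h,i} + \ddiv\bsigma_{h,i} = 0$, the first identity in \eqref{eq:sigmacond}. With both identities established, all assumptions of Theorem~\ref{th:BMPW} are in place.
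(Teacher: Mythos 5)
Your proposal is correct and follows essentially the same route as the paper's own proof: membership in $\Hdiv$ from the construction, the boundary identity from the prescribed normal trace in $\bWh$ together with $\alpha=\beta_2=0$ on $\GammaNz$, and the divergence identity by noting that both sides of \eqref{eq:sig2} lie in $Q_h$ so the weak constraint holds pointwise. Your write-up merely makes explicit the ``upgrade to a pointwise identity'' step that the paper states in one line.
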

\begin{proof}
The fact that $\bsigma_{h,i} \in \Hdiv$ is immediate from the construction.
The first condition in \eqref{eq:sigmacond} is included in the constraint \eqref{eq:sig2} on $\ddiv\bsigma_{h,i}$,
because piecewise constant coefficients $c$ and $\beta_1$ vanish in $\Omega_0$ and both $\ddiv \bsigma_{h,i}$ and $(c - \Lambda_{h,i} \beta_1)(\Lambda_{h,i}+\gamma)^{-1} u_{h,i}$ lie in $Q_h$. 
The second condition in \eqref{eq:sigmacond} is satisfied due to the choice of boundary conditions in $\bWh$ and the fact that  piecewise constant $\alpha$ and $\beta_2$ vanish in $\GammaNz$.
\end{proof}

\section{Derivation of the simplified flux reconstruction}
\label{se:simplified}

The global saddle point problem \eqref{eq:sig1}--\eqref{eq:sig2} is a direct analogy of problem \eqref{eq:sigLap1}--\eqref{eq:sigLap2}, see also \cite{BehMerPluWie2000}.
In order to derive its simplified version, we will first analyse the Lehmann--Goerisch method.


The Lehmann--Goerisch method stems from the Lehmann method \cite{Lehmann1949,Lehmann1950}. The original Lehmann method can be formulated as in Theorem~\ref{th:BMPW} up to one difference: matrix $\hat\bA_2$ has to be replaced by matrix $\bA_2$ defined by
$$
  \bA_{2,ij} = a(w_i,w_j), \quad i,j=1,2,\dots,n,
$$
where $w_i \in V$ is the unique function satisfying 
\begin{equation}
  \label{eq:defwi}
  a(w_i,v) = b(\tilde u_i,v) \quad \forall v \in V.
\end{equation}
Matrix $\bA_2$ is optimal in the context of Theorem~\ref{th:BMPW}, but it is not computable in practice, because functions $w_i$ are in general unknown. The $(\b{X},\cB,T)$ concept of Goerisch (as we use it the proof of Theorem~\ref{th:BMPW}) replaces $\bA_2$ by a computable matrix $\hat\bA_2$.  Thus, the idea is to construct matrix $\hat\bA_2$ as close as possible to the optimal matrix $\bA_2$.

Matrix $\hat\bA_2$ is a good approximation of $\bA_2$ if $\hbw_i$ are good approximations of $Tw_i$ for all $i=1,2,\dots,n$, because
by \eqref{eq:auvBTuTv} and \eqref{eq:hatAB}, we have $\bA_{2,ij} = a(w_i,w_j) = \cB(Tw_i,Tw_j)$ and $\hat\bA_{2,ij} = \cB(\hbw_i,\hbw_j)$.
In order to estimate the difference $Tw_i - \hbw_i$, we utilize the complementarity technique. 

First of all, we notice that definitions \eqref{eq:auvBTuTv}, \eqref{eq:defhatwi}, and \eqref{eq:defwi} imply
$$
  \cB(Tw_i - \hbw_i, Tv) = 0 \quad \forall v \in V.
$$
Thus, we immediately obtain the Pythagorean identity
\begin{equation}
\label{eq:pythagoras}
  | Tw_i - \hbw_i|_\cB^2 + |Tw_i - Tz|_\cB^2 = |\hbw_i - Tz|_\cB^2
  \quad\forall z\in V,
\end{equation}
where $|\b{v}|_\cB = \cB(\b{v},\b{v})^{1/2}$ denotes the seminorm induced by $\cB$ on $\b{X}$.
Here, we use the following observation.
If the pair $\tilde\lambda_i,\tilde u_i$ is a good approximation of the exact eigenpair $\lambda_i,u_i$ then 
$a(w_i,v) = b(\tilde u_i,v) \approx (\tilde\lambda_i+\gamma)^{-1} a(\tilde u_i,v)$ for all $v\in V$
and we observe that $z = (\tilde\lambda_i+\gamma)^{-1} \tilde u_i$ is a good approximation of $w_i$.
Thus, using this choice of $z$ in \eqref{eq:pythagoras}, we have 
the term $|Tw_i - Tz|_\cB = |Tw_i - (\tilde\lambda_i+\gamma)^{-1} T\tilde u_i|_\cB = \| w_i - (\tilde\lambda_i+\gamma)^{-1} \tilde u_i\|_a$ sufficiently small.
Consequently, minimizing $|\hbw_i - T z|_\cB$ we also minimize $|Tw_i - \hbw_i|_\cB$.
This motivates us to seek suitable $\hbw_i$ that minimizes the quadratic functional
\begin{equation}
\label{eq:funcabs}
  |\hbw_i - (\tilde\lambda_i+\gamma)^{-1} T \tilde u_i|_\cB^2.
\end{equation}


Using specific forms \eqref{eq:defB}, \eqref{eq:defT}, and \eqref{eq:hwiform} of bilinear form $\cB$, operator $T$, and vector $\hbw_i$, respectively,
using approximations $\tilde\lambda_i = \Lambda_{h,i}$, $\tilde u_i = u_{h,i}$, 
and taking advantage of the fact that piecewise constant $c$, $\beta_1$ vanish in $\Omega_0$ and 
$\alpha$, $\beta_2$ vanish on $\GammaNz$,
the quadratic functional \eqref{eq:funcabs} admits the following form:
\begin{multline}
\label{eq:functional}
  \norm{\frac{\cA^{1/2}\nabla u_{h,i}}{\Lambda_{h,i}+\gamma} - \cA^{-1/2}\tbsigma_i }_0^2
  +\norm{\frac{1}{(c+\gamma\beta_1)^{1/2}} \left( \frac{\Lambda_{h,i}\beta_1 - c}{\Lambda_{h,i}+\gamma} u_{h,i} + \ddiv\tbsigma_i \right)}_{0,\Omega_+}^2
\\  
  +\norm{\frac{1}{(\alpha+\gamma\beta_2)^{1/2}} \left( \frac{\Lambda_{h,i} \beta_2 - \alpha}{\Lambda_{h,i}+\gamma} u_{h,i} - \tbsigma_i \cdot \bn_\Omega \right)}_{0,\GammaNp}^2
\end{multline}
Notice that in the special case of the Laplace eigenvalue problem, this functional coincides with the one in \eqref{eq:minglobuncons}.

The goal is to minimize this functional over a suitable finite dimensional subspace,
namely over the first-order Raviart--Thomas space.
Defining
$$
  \tbWh = \{ \bsigma_h \in \Hdiv : \bsigma_h|_K \in \RT_1(K) \ \forall K \in \cT_h \text{ and } 
    \bsigma_h \cdot \bn_\Omega = 0 \text{ on }\GammaNz
  \},
$$
we find out that the minimizer $\bsigma_{h,i} \in \tbWh$ of \eqref{eq:functional}
under the constraints
\begin{equation}
\label{eq:constraints}
\ddiv \bsigma_{h,i} = 
\frac{c - \Lambda_{h,i}\beta_1}{\Lambda_{h,i}+\gamma} u_{h,i} \text{ in }\Omega
\quad\text{and}\quad
\bsigma_{h,i} \cdot \bn_\Omega = \frac{\Lambda_{h,i}\beta_2 - \alpha}{\Lambda_{h,i}+\gamma} u_{h,i} 
\text{ on }\GammaNp
\end{equation}
%
%
solves 
the saddle point problem \eqref{eq:sig1}--\eqref{eq:sig2}.
Notice that equalities \eqref{eq:sig1}--\eqref{eq:sig2} are the Euler--Lagrange equations corresponding to this constraint minimization problem.
We also note that $\bWh \subset \tbWh$, because $\alpha$ and $\beta_2$ vanish on $\GammaNz$.

The important observation is that constraints \eqref{eq:constraints} are not necessary and we can minimize the functional \eqref{eq:functional} over $\bsigma_{h,i} \in \tbWh$ with the only constraint dictated by conditions \eqref{eq:sigmacond}. The corresponding minimizer $(\bsigma_{h,i},q_h) \in \tbWh \times \widetilde Q_h$ solves the Euler--Lagrange equations
\begin{multline}
\label{eq:sigB1}
  \left(\cA^{-1} \bsigma_{h,i}, \bw_h\right) 
  + \left( \frac{\ddiv\bsigma_{h,i}}{c+\gamma\beta_1}, \ddiv \bw_h \right)_{\Omega_+}
  + \left( \frac{\bsigma_{h,i}\cdot\bn_\Omega}{\alpha+\gamma\beta_2} , \bw_h\cdot\bn_\Omega \right)_{\GammaNp}
\\
  + (q_h, \ddiv\bw_h)_{\Omega_0}
  = 
  \left( \frac{\nabla u_{h,i}}{\Lambda_{h,i}+\gamma}, \bw_h \right)
  - \left( \frac{(\Lambda_{h,i}\beta_1 - c)u_{h,i}}{(c+\gamma\beta_1)(\Lambda_{h,i}+\gamma)} , \ddiv \bw_h \right)_{\Omega_+}
\\  
  + \left( \frac{(\Lambda_{h,i}\beta_2 - \alpha) u_{h,i}}{(\alpha+\gamma\beta_2)(\Lambda_{h,i}+\gamma)} ,  \bw_h \cdot\bn_\Omega \right)_{\GammaNp}
\end{multline}
for all $\bw_h \in \tbWh$ and
\begin{equation}
\label{eq:sigB2}
  (\ddiv \bsigma_{h,i}, \varphi_h )_{\Omega_0}
    = 0
  \quad \forall \varphi_h \in \widetilde Q_h,  
\end{equation}
where
$$
  \widetilde Q_h = \{ q_h \in L^2(\Omega_0) : q_h |_K \in P_1(K) \quad\forall K \in \cT_h, K \subset\overline\Omega_0\}.
$$

The following lemma shows that this flux reconstruction can be immediately used in the Lehmann--Goerisch method for lower bounds on eigenvalues.
\begin{lemma}
\label{le:sigBOK}
Flux reconstruction $\bsigma_{h,i} \in \tbWh$ computed by solving problem \eqref{eq:sigB1}--\eqref{eq:sigB2} satisfies all assumptions of 
Theorem~\ref{th:BMPW}.
\end{lemma}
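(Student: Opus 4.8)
The plan is to verify the two structural requirements that Theorem~\ref{th:BMPW} imposes on a flux: membership $\bsigma_{h,i} \in \Hdiv$, and the two pointwise conditions collected in \eqref{eq:sigmacond}, namely $\beta_1 u_{h,i} + \ddiv\bsigma_{h,i} = 0$ in $\Omega_0$ and $\beta_2 u_{h,i} - \bsigma_{h,i}\cdot\bn_\Omega = 0$ on $\GammaNz$ (recall $\tilde u_i = u_{h,i}$). The argument runs parallel to the proof of Lemma~\ref{le:sigOK}; the only genuine difference is that here one of the two conditions is not enforced as an exact constraint but must be \emph{extracted} from the weak equation \eqref{eq:sigB2}.

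Two of the three items I would dispose of by merely reading off the definition of the space $\tbWh$. First, $\tbWh \subset \Hdiv$ by construction, so $\bsigma_{h,i} \in \Hdiv$ is immediate. Second, every element of $\tbWh$ satisfies $\bsigma_{h,i}\cdot\bn_\Omega = 0$ on $\GammaNz$; since the piecewise constant coefficient $\beta_2$ vanishes on $\GammaNz$ (by the definition of $\GammaNz$ as the union of Neumann edges on which both $\alpha_E$ and $\beta_{2E}$ vanish), we get $\beta_2 u_{h,i} - \bsigma_{h,i}\cdot\bn_\Omega = 0 - 0 = 0$ there, which is exactly the second condition in \eqref{eq:sigmacond}.

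The one step that requires a short argument is the first condition in \eqref{eq:sigmacond}. In $\Omega_0$ the piecewise constant coefficient $\beta_1$ vanishes, so it suffices to show $\ddiv\bsigma_{h,i} = 0$ in $\Omega_0$. I would obtain this from \eqref{eq:sigB2} by the standard complementarity trick: choose the test function $\varphi_h = \ddiv\bsigma_{h,i}|_{\Omega_0}$. This choice is legitimate because $\bsigma_{h,i}|_K \in \RT_1(K)$ implies $\ddiv\bsigma_{h,i}|_K \in P_1(K)$ on each $K$, so the restriction of $\ddiv\bsigma_{h,i}$ to $\Omega_0$ lies in the test space $\widetilde Q_h$. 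Substituting it into \eqref{eq:sigB2} yields $\norm{\ddiv\bsigma_{h,i}}_{0,\Omega_0}^2 = 0$, whence $\ddiv\bsigma_{h,i} = 0$ in $\Omega_0$ and therefore $\beta_1 u_{h,i} + \ddiv\bsigma_{h,i} = 0$ there, completing the verification.

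There is no real obstacle here; the proof is essentially a check. The only point deserving attention is the legality of the test-function choice in the last step, which rests on the inclusion $\ddiv\RT_1(K) \subseteq P_1(K)$ matching the polynomial degree of $\widetilde Q_h$; once this is noted, the pointwise divergence-free property in $\Omega_0$ falls out of the weak equation \eqref{eq:sigB2} exactly as the analogous constraint was built into \eqref{eq:sig2} in Lemma~\ref{le:sigOK}.
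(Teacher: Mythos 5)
Your proposal is correct and follows essentially the same route as the paper's proof: membership in $\Hdiv$ and the second condition of \eqref{eq:sigmacond} are read off from the definition of $\tbWh$ together with the vanishing of $\beta_2$ on $\GammaNz$, while the first condition follows from \eqref{eq:sigB2} by testing with $\ddiv\bsigma_{h,i}|_{\Omega_0}\in\widetilde Q_h$ and using that $\beta_1$ vanishes on $\Omega_0$. The paper states this last step more tersely, but your explicit justification via $\ddiv\RT_1(K)\subseteq P_1(K)$ is exactly the intended argument.
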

\begin{proof}
The definition of $\tbWh$ immediately implies that $\bsigma_{h,i} \in \Hdiv$.
Equation \eqref{eq:sigB2} guarantees the validity of the first condition in \eqref{eq:sigmacond}, because the piecewise constant $\beta_1$ vanishes in $\Omega_0$ and $\ddiv\bsigma_{h_i}|_{\Omega_0}$ lies in $\widetilde Q_h$.
The second condition in \eqref{eq:sigmacond} is satisfied due to the choice of boundary conditions in $\tbWh$ and the fact that the piecewise constant $\beta_2$ vanishes in $\GammaNz$.
\end{proof}

Euler--Lagrange equations \eqref{eq:sigB1}--\eqref{eq:sigB2} are especially useful if 
\begin{equation}
\label{eq:cbeta1pos}
  \text{either}\quad c_K > 0 \quad\text{or}\quad \beta_{1K} > 0 \quad\text{or both hold for all }K\in\cT_h.
\end{equation}
In this case the domain $\Omega_0$ is empty, $\Omega_+ = \Omega$, and the saddle point problem \eqref{eq:sigB1}--\eqref{eq:sigB2} reduces to
a positive definite problem of finding $\bsigma_{h,i} \in \tbWh$ such that
\begin{multline}
\label{eq:sigC}
  \left(\cA^{-1} \bsigma_{h,i}, \bw_h \right) 
  + \left( \frac{\ddiv \bsigma_{h,i}}{c+\gamma\beta_1} , \ddiv \bw_h \right)
  + \left( \frac{\bsigma_{h,i}\cdot\bn_\Omega}{\alpha+\gamma\beta_2}, \bw_h\cdot\bn_\Omega \right)_{\GammaNp}
\\  
  = 
  \left( \frac{\nabla u_{h,i}}{\Lambda_{h,i}+\gamma}, \bw_h \right)
  - \left( \frac{(\Lambda_{h,i}\beta_1 - c)u_{h,i}}{(c+\gamma\beta_1)(\Lambda_{h,i}+\gamma)} , \ddiv \bw_h \right)
\\  
  + \left( \frac{(\Lambda_{h,i}\beta_2 - \alpha) u_{h,i}\cdot\bn_\Omega}{(\alpha+\gamma\beta_2)(\Lambda_{h,i}+\gamma)} ,  \bw_h\cdot\bn_\Omega \right)_{\GammaNp}
\end{multline}
for all $\bw_h \in \tbWh$.
Notice that this problem simplifies to \eqref{eq:sigLapB} in the special case of the Laplace eigenvalue problem.
Further notice that fluxes computed by \eqref{eq:sigC} satisfy all assumptions of Theorem~\ref{th:BMPW} by Lemma~\ref{le:sigBOK}.


%
%
%
%
%
%
%
%

\section{Localization of global problems for the general eigenvalue problem}
\label{se:local}
Global problems \eqref{eq:sig1}--\eqref{eq:sig2}, \eqref{eq:sigB1}--\eqref{eq:sigB2}, and \eqref{eq:sigC}
for fluxes $\bsigma_{h,i}$ are all considerably larger than the original eigenvalue problem \eqref{eq:fem} in terms of degrees of freedom.
Thus, solving any of these problems is the most expensive part of the computation of lower bounds, especially in terms of the computer memory. Therefore, we localize these global problems as in Section~\ref{se:simpleLap}.
We recall that this idea was developed in \cite{BraSch:2008,DolErnVoh2016,ErnVoh2013} for boundary value problems and enables to reconstruct the flux by solving a series of small independent problems.

We use the same partition of unity as in Section~\ref{se:simpleLap}. We recall hat functions $\psi_\bz$, patches of elements $\cT_\bz$ and $\omega_\bz$, and the notation $\GammaEbz$ for the union of those edges on the boundary $\partial\omega_\bz$ that do not contain $\bz$.
In addition, we introduce sets $\GammaNpbz$ and $\GammaNzbz$ as unions of edges $E\in\cEN_h$
lying either on $\GammaNp\cap\partial\omega_\bz$ or $\GammaNz\cap\partial\omega_\bz$, respectively, 
and having an end point at $\bz$.
We also set $\GammaNbz = \GammaNpbz \cup \GammaNzbz$.


Similarly as for global problems, we update the definition of spaces localized to patches $\omega_\bz$:
\begin{align*}
  \bWz &= \left\{ \rule{0pt}{13pt} \bsigma_\bz \in \Hdiv[\omega_\bz] : \bsigma_\bz|_K \in \RT_1(K) \ \forall K \in \cT_\bz, \right. 
 \\ &\quad \left.
    \bsigma_\bz \cdot \bn_\bz = 0 \text{ on }\GammaEbz\cup\GammaNzbz,
  \quad 
    \bsigma_\bz \cdot \bn_\bz = \frac{\Lambda_{h,i}\beta_2 - \alpha}{\Lambda_{h,i} + \gamma} \Pi_E(\psi_\bz u_{h,i}) \text{ on edges } E \subset \GammaNpbz
  \right\},
\\
  \bWz^0 &= \left\{ \bsigma_\bz \in \Hdiv[\omega_\bz] : \bsigma_\bz|_K \in \RT_1(K) \ \forall K \in \cT_\bz 
  \text{ and } \bsigma_\bz \cdot \bn_\bz = 0 \text{ on } \GammaEbz\cup\GammaNbz
  \right\},
\end{align*}
where $\Pi_E: L^2(E) \mapsto P_1(E)$ is the $L^2$ orthogonal projection on edges $E \subset \GammaNpbz$.
Note that the space $Q_\bz$ remains the same as in \eqref{eq:defQz}.
Localization of the saddle point problem \eqref{eq:sig1}--\eqref{eq:sig2} generalizes the case of Laplace eigenvalue problem, see \eqref{eq:sigLap1loc}--\eqref{eq:sigLap2loc}.
Fluxes $\bsigma_{h,i} \in \bWh$ are computed as
\begin{equation}
\label{eq:sigsum}
  \bsigma_{h,i} = \sum\limits_{\bz\in\cN_h} \bsigma_{\bz,i},
\end{equation}
where $\bsigma_{\bz,i}$ are determined by solving the following problem: 
find $(\bsigma_{\bz,i},q_{\bz,i}) \in \bWz \times Q_\bz$ such that
\begin{gather}
  \label{eq:sig1loc}
  \left(\cA^{-1}\bsigma_{\bz,i}, \bw_h\right)_{\omega_\bz} + (q_{\bz,i}, \ddiv \bw_h)_{\omega_\bz} = \left(\psi_\bz \frac{\nabla u_{h,i}}{\Lambda_{h,i}+\gamma}, \bw_h\right)_{\omega_\bz} 
  \quad \forall \bw_h \in \bWz^0, 
\\ \label{eq:sig2loc} 
  (\ddiv \bsigma_{\bz,i}, \varphi_h )_{\omega_\bz}
    =  \left( \frac{c - \Lambda_{h,i} \beta_1}{\Lambda_{h,i}+\gamma} \psi_\bz u_{h,i}, \varphi_h\right)_{\omega_\bz} 
  + \left( \frac{(\cA \nabla \psi_\bz) \cdot \nabla u_{h,i}}{\Lambda_{h,i}+\gamma}, \varphi_h\right)_{\omega_\bz}
  \quad \forall \varphi_h \in Q_\bz.
\end{gather}
As in the case of local problems \eqref{eq:sigLap1loc}--\eqref{eq:sigLap2loc} the consistency of equation \eqref{eq:sig2loc} for interior and Neumann nodes follows from identity \eqref{eq:fem}.
Interestingly, the following lemma shows that the local flux reconstruction $\bsigma_{h,i}$ given by \eqref{eq:sigsum} and \eqref{eq:sig1loc}--\eqref{eq:sig2loc} satisfies the same constraints as the original flux reconstruction computed by solving \eqref{eq:sig1}--\eqref{eq:sig2}. 

\begin{lemma}
\label{le:locsigcons}
Let $\bsigma_{\bz,i} \in \bWz$ be solutions of problems \eqref{eq:sig1loc}--\eqref{eq:sig2loc} for all $\bz\in\cN_h$ and let $\bsigma_{h,i}$ be given by \eqref{eq:sigsum}.
Then $\bsigma_{h,i} \in \tbWh$ and it satisfies constraints \eqref{eq:constraints}.
Consequently, it satisfies all assumptions of Theorem~\ref{th:BMPW}.
\end{lemma}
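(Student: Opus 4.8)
The plan is to verify the three assertions in turn: membership $\bsigma_{h,i}\in\tbWh$, the two constraints \eqref{eq:constraints}, and finally the hypotheses of Theorem~\ref{th:BMPW} on $\tbsigma_i$, the last of which will reduce to Lemma~\ref{le:sigOK}. First I would establish $\Hdiv$-conformity and the membership $\bsigma_{h,i}\in\tbWh$. Extending each local flux $\bsigma_{\bz,i}$ by zero outside its patch $\omega_\bz$, the property $\bsigma_{h,i}|_K\in\RT_1(K)$ on every $K$ is immediate from \eqref{eq:sigsum}. For conformity I would check normal continuity across an arbitrary interior edge $E=K_1\cap K_2$ with endpoints $\bz_1,\bz_2$: the contributions of $\bsigma_{\bz_1,i}$ and $\bsigma_{\bz_2,i}$ are normally continuous across $E$ because $E$ is an interior edge of the patches $\omega_{\bz_1}$, $\omega_{\bz_2}$ and these fluxes already lie in $\Hdiv[\omega_{\bz_j}]$, while the two remaining contributions come from the vertices of $K_1,K_2$ opposite to $E$, for which $E\subset\GammaEbz$ so the normal trace vanishes on $E$ from the patch side and is zero by the zero extension on the other side. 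The condition $\bsigma_{h,i}\cdot\bn_\Omega=0$ on $\GammaNz$ follows by the same bookkeeping, now invoking the vanishing normal data on $\GammaEbz\cup\GammaNzbz$ built into $\bWz$.

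Next I would treat the divergence constraint. Since $\bsigma_{h,i}\in\Hdiv$ is piecewise $\RT_1$, its divergence is the element-wise divergence and equals $\sum_{\bz}\ddiv\bsigma_{\bz,i}$; moreover both $\ddiv\bsigma_{h,i}$ and $(c-\Lambda_{h,i}\beta_1)(\Lambda_{h,i}+\gamma)^{-1}u_{h,i}$ belong to $Q_h$, so it suffices to match their $L^2(\Omega)$ inner products against an arbitrary $\varphi_h\in Q_h$. Testing each patch identity \eqref{eq:sig2loc} with $\varphi_h|_{\omega_\bz}\in Q_\bz$ and summing over $\bz\in\cN_h$, the partition-of-unity relations $\sum_\bz\psi_\bz\equiv 1$ and $\sum_\bz\nabla\psi_\bz=0$ collapse the first right-hand term of \eqref{eq:sig2loc} to $\bigl((c-\Lambda_{h,i}\beta_1)(\Lambda_{h,i}+\gamma)^{-1}u_{h,i},\varphi_h\bigr)$ and annihilate the correction term carrying $\nabla\psi_\bz$. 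Equality of the two $Q_h$ functions then gives the first constraint in \eqref{eq:constraints}.

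For the normal-trace constraint I would fix a Neumann edge $E\subset\GammaNp$ with endpoints $\bz_1,\bz_2$. On $E$ only $\bsigma_{\bz_1,i}$ and $\bsigma_{\bz_2,i}$ contribute, the opposite-vertex fluxes vanishing through the $\GammaEbz$ condition, and on this boundary edge $\bn_\bz=\bn_\Omega$. Their prescribed normal traces in $\bWz$ are $\kappa_E\Pi_E(\psi_{\bz_j}u_{h,i})$, where $\kappa_E=(\Lambda_{h,i}\beta_2-\alpha)(\Lambda_{h,i}+\gamma)^{-1}$ is constant on $E$ because $\alpha,\beta_2$ are piecewise constant. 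The delicate point, which I expect to be the main obstacle, is that each product $\psi_{\bz_j}u_{h,i}$ is quadratic on $E$, so each $\Pi_E$ genuinely projects and the two traces are not individually equal to the desired linear data; the resolution is the linearity of $\Pi_E$ together with $\psi_{\bz_1}+\psi_{\bz_2}\equiv 1$ on $E$, giving $\Pi_E(\psi_{\bz_1}u_{h,i})+\Pi_E(\psi_{\bz_2}u_{h,i})=\Pi_E(u_{h,i})$, and since $u_{h,i}|_E\in P_1(E)$ this equals $u_{h,i}|_E$. Hence $\bsigma_{h,i}\cdot\bn_\Omega=\kappa_E u_{h,i}$ on $E$, which is the second constraint in \eqref{eq:constraints}.

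Finally, the last assertion follows exactly as in the proof of Lemma~\ref{le:sigOK}: the inclusion $\tbWh\subset\bW$ gives $\bsigma_{h,i}\in\bW$, while the two constraints \eqref{eq:constraints}, combined with the vanishing of the piecewise constant coefficients $c,\beta_1$ in $\Omega_0$ and $\alpha,\beta_2$ on $\GammaNz$, yield precisely conditions \eqref{eq:sigmacond}, so that $\bsigma_{h,i}$ satisfies all hypotheses of Theorem~\ref{th:BMPW}.
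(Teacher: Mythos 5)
Your proof is correct and follows essentially the same route as the paper's: zero extension of the patch fluxes for $\Hdiv$-conformity, summation of the local divergence identities \eqref{eq:sig2loc} with the partition of unity annihilating the $\nabla\psi_\bz$ correction, and linearity of $\Pi_E$ together with $\sum_{\bz}\psi_\bz=1$ on $E$ for the normal-trace constraint. The only cosmetic difference is that the paper tests the local identities directly with the residual $r_h=\ddiv\bsigma_{h,i}-(c-\Lambda_{h,i}\beta_1)(\Lambda_{h,i}+\gamma)^{-1}u_{h,i}\in Q_\bz$ to get $\norm{r_h}_{L^2(\Omega)}=0$, whereas you test with an arbitrary $\varphi_h\in Q_h$; these are equivalent.
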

\begin{proof}
Since $\bsigma_{\bz,i} \in \bWz$ have zero normal components on edges $E\subset\GammaEbz$, it can be extended by zero to entire $\Omega$ and the extension lies in $\Hdiv$. Thus, by \eqref{eq:sigsum} we conclude that $\bsigma_{h,i} \in \Hdiv$.

In order to prove the first constraint in \eqref{eq:constraints}, we set 
$$
  r_h = \ddiv \bsigma_{h,i} - \frac{c - \Lambda_{h,i} \beta_1}{\Lambda_{h,i}+\gamma} u_{h,i}
$$
and prove that $r_h = 0$.
Notice that $r_h|_K \in P_1(K)$ for all $K\in \cT_h$, because coefficients $c$ and $\beta_1$ are piecewise constant. Thus, $r_h|_{\omega_\bz} \in Q_\bz$ for all $\bz \in \cN_h$.
Using the partition of unity $\sum_{\bz\in\cN_h} \psi_\bz \equiv 1$ and \eqref{eq:sig2loc}, we obtain
\begin{align*}
  \norm{r_h}_{L^2(\Omega)}^2 &=
  \sum_{\bz\in\cN_h} \left( 
     \ddiv \bsigma_{\bz,i} - \frac{c - \Lambda_{h,i} \beta_1}{\Lambda_{h,i}+\gamma} \psi_\bz u_{h,i}
      - \frac{(\cA \nabla \psi_\bz) \cdot \nabla u_{h,i}}{\Lambda_{h,i}+\gamma},
    r_h \right)_{\omega_\bz}
 &= 0.
\end{align*}

To prove that normal components of $\bsigma_{h,i}$ satisfy the second constraint in \eqref{eq:constraints},
we introduce the set $\cN_E$ of the two end points of the edge $E\in\cEN_h$ and use boundary conditions specified in the definition of $\bWz$. On every edge $E \subset \GammaNp$ we have
$$
  \bsigma_{h,i} \cdot \bn_\Omega
  = \sum_{\bz \in \cN_E} \bsigma_{\bz,i} \cdot \bn_\bz
  = \frac{\Lambda_{h,i}\beta_2 - \alpha}{\Lambda_{h,i} + \gamma} \Pi_E\left(\sum_{\bz \in \cN_E} \psi_\bz u_{h,i}\right) 
  = \frac{\Lambda_{h,i}\beta_2 - \alpha}{\Lambda_{h,i} + \gamma} u_{h,i},
$$
where we use properties of the projection $\Pi_E$ and the fact that $\sum_{\bz\in\cN_E} \psi_\bz = 1$ on the edge $E$.
Similarly, it is easy to see that $\bsigma_{h,i} \cdot \bn_\Omega = 0$ on $\GammaNz$. 

Thus, $\bsigma_{h,i}$ lies in $\tbWh$ and satisfies both constraints in \eqref{eq:constraints}.
Since $c$, $\beta_1$ and $\alpha$, $\beta_2$ are piecewise constant and vanish in $\Omega_0$ and $\GammaNz$, respectively, we immediately see that conditions \eqref{eq:sigmacond} in Theorem~\ref{th:BMPW} are satisfied.
\end{proof}

To localize the global saddle point problem \eqref{eq:sigB1}--\eqref{eq:sigB2}, we have to remove the prescribed values of normal components of reconstructed fluxes on $\GammaNpbz$.
For that purpose, we introduce spaces
\begin{align*}
\tbWz &= \{ \bw_h \in \Hdiv[\omega_\bz] : \bw_h|_K \in \RT_1(K) \quad\forall K \in \cT_h \text{ and } 
    \bw_h \cdot \bn_\Omega = 0 \text{ on }\GammaEbz\cup\GammaNzbz \},
\\    
\widetilde Q_\bz &= \{ q_h \in L^2(\omega_\bz\cap\Omega_0) : q_h |_K \in P_1(K) \quad\forall K \in \cT_\bz, K \subset\overline\Omega_0\}.
\end{align*}
We seek $\bsigma_{h,i} \in \tbWh$
in the form \eqref{eq:sigsum}, where $(\bsigma_{\bz,i},q_{\bz,i}) \in \tbWz \times \widetilde Q_\bz$ are such that
\begin{multline}
\label{eq:sigB1loc}
  \left(\cA^{-1} \bsigma_{\bz,i}, \bw_h\right)_{\omega_\bz} 
  + \left( \frac{\ddiv\bsigma_{\bz,i}}{c+\gamma\beta_1}, \ddiv \bw_h \right)_{\omega_\bz\cap\Omega_+}
  + \left( \frac{\bsigma_{\bz,i}\cdot\bn_\Omega}{\alpha+\gamma\beta_2} , \bw_h\cdot\bn_\Omega \right)_{\GammaNpbz}
\\
  + (q_{\bz,i}, \ddiv\bw_h)_{\omega_\bz\cap\Omega_0}
  = 
  \left( \psi_\bz\frac{\nabla u_{h,i}}{\Lambda_{h,i}+\gamma}, \bw_h \right)_{\omega_\bz}
  - \left( \frac{(\Lambda_{h,i}\beta_1 - c)\psi_\bz u_{h,i}}{(c+\gamma\beta_1)(\Lambda_{h,i}+\gamma)} , \ddiv\bw_h \right)_{\omega_\bz\cap\Omega_+}
\\  
+ \left( \frac{(\cA \nabla \psi_\bz) \cdot \nabla u_{h,i}}{(c+\gamma\beta_1)(\Lambda_{h,i}+\gamma)}, \ddiv\bw_h\right)_{\omega_\bz\cap\Omega+}  
  + \left( \frac{(\Lambda_{h,i}\beta_2 - \alpha)\psi_\bz u_{h,i}}{(\alpha+\gamma\beta_2)(\Lambda_{h,i}+\gamma)} ,  \bw_h\cdot\bn_\Omega \right)_{\GammaNpbz}
\end{multline}
for all $\bw_h \in \tbWz$ and
\begin{equation}
\label{eq:sigB2loc}
  (\ddiv \bsigma_{\bz,i}, \varphi_h )_{\omega_\bz\cap\Omega_0}
 = \left( \frac{(\cA \nabla \psi_\bz) \cdot \nabla u_{h,i}}{\Lambda_{h,i}+\gamma}, \varphi_h\right)_{\omega_\bz\cap\Omega_0}      
  \quad \forall \varphi_h \in \widetilde Q_\bz.  
\end{equation}

\begin{lemma}
\label{le:sigBlocOK}
Let $\bsigma_{\bz,i} \in \tbWz$ be solutions of problems \eqref{eq:sigB1loc}--\eqref{eq:sigB2loc} for all $\bz\in\cN_h$ and let $\bsigma_{h,i}$ be given by \eqref{eq:sigsum}.
Then $\bsigma_{h,i} \in \tbWh$ and it satisfies all assumptions of Theorem~\ref{th:BMPW}.
\end{lemma}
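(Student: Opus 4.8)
The plan is to combine the two mechanisms already exploited in Lemmas~\ref{le:locsigcons} and~\ref{le:sigBOK}: the $\Hdiv$-conformity of zero-extended patch contributions together with the cancellation provided by the partition of unity, and the observation that on $\Omega_0$ and on $\GammaNz$ the piecewise constant coefficients $\beta_1$ and $\beta_2$ vanish, so that the two requirements in \eqref{eq:sigmacond} collapse to a divergence condition in $\Omega_0$ and a normal-trace condition on $\GammaNz$. Accordingly I would split the argument into two claims: first that $\bsigma_{h,i}\in\tbWh$, and second that $\bsigma_{h,i}$ satisfies \eqref{eq:sigmacond}; the assumptions of Theorem~\ref{th:BMPW} then hold automatically. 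Note that, in contrast to Lemma~\ref{le:locsigcons}, this reconstruction will only satisfy \eqref{eq:sigmacond} and not the stronger constraints \eqref{eq:constraints}, exactly as for its global counterpart in Lemma~\ref{le:sigBOK}.

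For the first claim I would proceed as in the opening of the proof of Lemma~\ref{le:locsigcons}. Each $\bsigma_{\bz,i}\in\tbWz$ has vanishing normal trace on $\GammaEbz$, which contains every edge of $\partial\omega_\bz$ interior to $\Omega$, so its extension by zero to $\Omega$ lies in $\Hdiv$; summing over $\bz$ through \eqref{eq:sigsum} gives $\bsigma_{h,i}\in\Hdiv$, and $\bsigma_{h,i}|_K\in\RT_1(K)$ since each summand is Raviart--Thomas. To obtain $\bsigma_{h,i}\cdot\bn_\Omega=0$ on $\GammaNz$, I would fix an edge $E\subset\GammaNz$ with endpoints $\cN_E$ and note that only the two endpoint patches $\bz\in\cN_E$ and the patch of the remaining vertex of the (unique) triangle adjacent to $E$ can contribute on $E$; for the endpoint patches $E\subset\GammaNzbz$, while for the third-vertex patch $E\subset\GammaEbz$, so every contributing $\bsigma_{\bz,i}\cdot\bn_\bz$ vanishes on $E$ (using $\bn_\bz=\bn_\Omega$ there). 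Hence $\bsigma_{h,i}\cdot\bn_\Omega=0$ on $\GammaNz$ and $\bsigma_{h,i}\in\tbWh$.

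For the second claim, since $\beta_2=0$ on $\GammaNz$, the second condition in \eqref{eq:sigmacond} is already contained in the trace identity just established. It remains to verify the first condition, which, because $\beta_1=0$ on $\Omega_0$, reduces to $\ddiv\bsigma_{h,i}=0$ in $\Omega_0$. Here I would reuse the summation device of Lemma~\ref{le:locsigcons}: set $r_h=\ddiv\bsigma_{h,i}$ restricted to $\Omega_0$, observe that $r_h$ is elementwise affine so that $r_h|_{\omega_\bz\cap\Omega_0}\in\widetilde Q_\bz$, and write $\norm{r_h}_{L^2(\Omega_0)}^2=\sum_{\bz}(\ddiv\bsigma_{\bz,i},r_h)_{\omega_\bz\cap\Omega_0}$. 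Testing \eqref{eq:sigB2loc} with $\varphi_h=r_h$ turns each summand into $((\cA\nabla\psi_\bz)\cdot\nabla u_{h,i}/(\Lambda_{h,i}+\gamma),r_h)_{\omega_\bz\cap\Omega_0}$, and summing over $\bz$ these cancel because $\sum_\bz\nabla\psi_\bz=0$. Thus $r_h=0$, which establishes the first condition in \eqref{eq:sigmacond}.

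The step I expect to need the most care is this divergence computation in $\Omega_0$: one must check that the restriction of $r_h$ to each patch is an admissible test function in $\widetilde Q_\bz$, and that the extra gradient term on the right-hand side of \eqref{eq:sigB2loc}—precisely the term that renders the local saddle point problems consistent—telescopes to zero under the partition of unity. The only other points requiring attention are the identification of $\bn_\bz$ with $\bn_\Omega$ on the boundary edges and the bookkeeping of which patches reach a given edge of $\GammaNz$; both are routine but should be stated explicitly so that the extension-by-zero and the trace cancellation are fully justified.
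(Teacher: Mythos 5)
Your proof is correct and follows essentially the same route as the paper's: zero extension of the patch contributions for $\Hdiv$-conformity, the vanishing normal traces on $\GammaEbz\cup\GammaNzbz$ for the second condition in \eqref{eq:sigmacond}, and the $\norm{r_h}_{L^2(\Omega_0)}^2$ summation device with \eqref{eq:sigB2loc} and $\sum_{\bz}\nabla\psi_\bz=0$ for the first. The extra bookkeeping you supply (which patches reach a given edge of $\GammaNz$, and the admissibility of $r_h|_{\omega_\bz\cap\Omega_0}$ as a test function in $\widetilde Q_\bz$) is correct and only makes explicit what the paper leaves implicit.
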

\begin{proof}
Zero normal components on edges $E\subset\GammaEbz$ enable to extend $\bsigma_{\bz,i} \in \bWz$ by zero such that the extension lies in $\Hdiv$ and consequently $\bsigma_{h,i}$ given by \eqref{eq:sigsum} lies in $\Hdiv$ as well.

The first condition in \eqref{eq:sigmacond} follows form \eqref{eq:sigB2loc}, the fact that $\ddiv \bsigma_{h,i}|_{\omega_\bz\cap\Omega_0}$ lies in $\widetilde Q_\bz$ and that piecewise constant $\beta_1=0$ in $\Omega_0$:
$$
    \norm{\ddiv \bsigma_{h,i} }_{L^2(\Omega_0)}^2 
  = \sum_{\bz\in\cN_h} (\ddiv \bsigma_{\bz,i}, \ddiv \bsigma_{h,i})_{\omega_\bz \cap \Omega_0}
  = \sum_{\bz\in\cN_h} \left( \frac{(\cA \nabla \psi_\bz) \cdot \nabla u_{h,i}}{\Lambda_{h,i}+\gamma}, \ddiv \bsigma_{h,i}\right)_{\omega_\bz\cap\Omega_0}      
  = 0.
$$
The second condition in \eqref{eq:sigmacond} is immediate form the requirements on normal components on $\GammaNzbz$ in the definition of $\tbWz$.
\end{proof}

Local saddle point problems \eqref{eq:sigB1loc}--\eqref{eq:sigB2loc} simplify to the following positive definite problems provided conditions
\eqref{eq:cbeta1pos} are satisfied: find $\bsigma_{h,i} \in \tbWh$
in the form \eqref{eq:sigsum}, where $\bsigma_{\bz,i} \in \tbWz$ are such that
\begin{multline}
\label{eq:sigCloc}
  \left(\cA^{-1} \bsigma_{\bz,i}, \bw_h\right)_{\omega_\bz}
  + \left( \frac{\ddiv\bsigma_{\bz,i}}{c+\gamma\beta_1} , \ddiv \bw_h \right)_{\omega_\bz}
  + \left( \frac{\bsigma_{\bz,i}\cdot\bn_\Omega}{\alpha+\gamma\beta_2}, \bw_h\cdot\bn_\Omega \right)_{\GammaNpbz}
\\  
  = 
  \left( \psi_\bz\frac{\nabla u_{h,i}}{\Lambda_{h,i}+\gamma}, \bw_h \right)_{\omega_\bz}
  - \left( \frac{(\Lambda_{h,i}\beta_1 - c)\psi_\bz u_{h,i}}{(c+\gamma\beta_1)(\Lambda_{h,i}+\gamma)} , \ddiv \bw_h \right)_{\omega_\bz}
\\  
  + \left( \frac{(\cA \nabla \psi_\bz) \cdot \nabla u_{h,i}}{(c+\gamma\beta_1)(\Lambda_{h,i}+\gamma)}, \ddiv\bw_h\right)_{\omega_\bz}  
  + \left( \frac{(\Lambda_{h,i}\beta_2 - \alpha)\psi_\bz u_{h,i}}{(\alpha+\gamma\beta_2)(\Lambda_{h,i}+\gamma)} ,  \bw_h\cdot\bn_\Omega \right)_{\GammaNpbz}
\end{multline}
for all $\bw_h \in \tbWz$.
The fact that this flux reconstruction satisfies all assumptions of Theorem~\ref{th:BMPW} follows from Lemma~\ref{le:sigBlocOK} as a special case.

We now summarize the Lehmann--Goerisch method for the general eigenvalue problem as an algorithm for computing lower bounds $\ell_i$, $i=1,2,\dots,m$, on the first $m$ eigenvalues. 

\bigskip

\noindent{\bf Algorithm 1.}
\begin{enumerate}
\item 
Let $\ell_{m+1} \leq \lambda_{m+1}$ be an \emph{a priori} known lower bound and let $\gamma >0$ be a fixed parameter. 
\item 
Compute standard finite element approximations \eqref{eq:fem} of the first $m$ eigenpairs $(\Lambda_{h,i}, u_{h,i}) \in \R \times V_h$, $i=1,2,\dots,m$. This provides upper bounds $\Lambda_{h,i}$, $i=1,2,\dots,m$, on the exact eigenvalues.
\item
Find $\bsigma_{h,i} \in \bWh$ (or $\tbWh$) for $i=1,2,\dots,m$ by solving one of the following problems:
\begin{itemize}
\item[(a)] global saddle point problem \eqref{eq:sig1}--\eqref{eq:sig2},
\item[(b)] global saddle point problem \eqref{eq:sigB1}--\eqref{eq:sigB2}, 
\item[(c)] global positive definite problem \eqref{eq:sigC}, provided condition \eqref{eq:cbeta1pos} is satisfied,
\item[(d)] local saddle point problems \eqref{eq:sig1loc}--\eqref{eq:sig2loc} and using \eqref{eq:sigsum},
\item[(e)] local saddle point problems \eqref{eq:sigB1loc}--\eqref{eq:sigB2loc} and using \eqref{eq:sigsum},
\item[(f)] local positive definite problems \eqref{eq:sigCloc} and using \eqref{eq:sigsum}, provided condition \eqref{eq:cbeta1pos} is satisfied.
\end{itemize}
\item Set $\rho = \ell_{m+1} + \gamma$.
\item Assemble matrices $\bM, \bN \in \R^{m\times m}$ using $\tilde u_i = u_{h,i}$ and $\tbsigma_i = \bsigma_{h,i}$ for $i=1,2,\dots,m$ as in Theorem~\ref{th:BMPW}.
\item Find eigenvalues $\mu_1 \leq \mu_2 \leq \cdots \leq \mu_m$ of \eqref{eq:MNproblem}.
\item 
If $\bN$ is not positive definite then set $\ell_j=-\infty$ for all $j = 1,2,\dots,m$.
\\
Otherwise
use \eqref{eq:llowinc} with $L=m+1$, $i=m+1-j$, $j=1,2,\dots,m$, and compute
$$
    \ell_j = \left\{\begin{array}{ll}
      \rho - \gamma - \rho/\left(1-\mu_{m+1-j}\right) & \text{if } \mu_{m+1-j} < 0, \\
      -\infty & \text{otherwise.}
    \end{array}\right.
$$
\end{enumerate}

The output of this algorithm consists of two-sided bounds on the first $m$ eigenvalues:
$$
  \ell_i \leq \lambda_i \leq \Lambda_{h,i}, \quad i=1,2,\dots,m.
$$
The relative eigenvalue enclosure size
\begin{equation}
\label{eq:relencl}
  (\Lambda_{h,i} - \ell_i)/\ell_i
\end{equation}  
bounds the true relative error and it
is used below in Sections~\ref{se:numex}--\ref{se:Steklov} as a measure of the accuracy of the method.
Let us note that if the \emph{a~priori} lower bound $\ell_{m+1}$ on $\lambda_{m+1}$ is too rough, typically if $\ell_{m+1} \leq \lambda_m$ then 
it may happen that Algorithm~1 still computes a positive lower bound $\ell_i$ on $\lambda_i$ for some $i$, but it will often be rough and will not converge to $\lambda_i$, but to a smaller eigenvalue.
Alternatively, it may happen that the assumptions on the positive definiteness of $\bN$ and/or on the negativity of $\mu_i$ are not satisfied and the algorithm returns $\ell_i = -\infty$ for some $i$.

Lemmas~\ref{le:sigOK}, \ref{le:sigBOK}, \ref{le:locsigcons}, and \ref{le:sigBlocOK} verify that all flux reconstructions presented in step 3 of Algorithm~1 satisfy assumptions of Theorem~\ref{th:BMPW}, which justifies that this algorithm produces lower bounds on eigenvalues.
In this paper we assume that matrices $\bM$ and $\bN$ in step 5, eigenvalues $\mu_1, \dots, \mu_m$ in step 6, and lower bounds $\ell_j$ in step 7 are computed exactly. If these computations are performed in the floating point arithmetic then they are polluted by round off errors and the computed lower bounds need not be guaranteed to be below the true eigenvalues. This problem can be solved by employing interval arithmetic as proposed for example in \cite{Plum1990,Plum1991,Liu2015}. We just note that the interval arithmetic is only needed in steps 4--7 of the Algorithm~1, where the most involved part is the solution of the small generalized eigenvalue problem with matrices $\bM$ and $\bN$. The finite element approximations $u_{h,i}$ in step 2 and flux reconstructions $\bsigma_{h,i}$ in step 3 can be polluted by various errors, because Theorem~\ref{th:BMPW} allows for arbitrary $\tilde u_i$ and $\tbsigma_i$.

\section{Numerical example -- Laplace eigenvalue problem in the dumbbell shaped domain}
\label{se:numex}

In this section, we compare the accuracy and computational performance of global and local flux reconstructions presented above. As an example we choose two-dimensional Laplace eigenvalue problem \eqref{eq:Laplace} in a dumbbell shaped domain \cite{TreBet2006}.
This domain can be expressed as
$\Omega = (0,\pi)^2 \cup \left( [\pi,5\pi/4]\times(3\pi/8,5\pi/8) \right) \cup \left( (5\pi/4,9\pi/4)\times(0,\pi) \right)$
and it is illustrated in Figure~\ref{fi:dmbl} (left).

\begin{figure}
\tikzset{
  big arrowhead/.style={
    decoration={markings, mark=at position 1 with {\arrow[scale=1.5,thin,black]{>}} },
    postaction={decorate},
    shorten >=0.4pt}}%
\begin{tikzpicture}[scale=0.65]
\draw [semithick] (0,0)--(4,0)--(4,1.5)--(5,1.5)--(5,0)--(9,0);
\draw [semithick] (9,0)--(9,4)--(5,4)--(5,2.5)--(4,2.5)--(4,4)--(0,4)--(0,0);
\draw [big arrowhead, thin] (0,3.25)--(4,3.25);
\draw [big arrowhead, thin] (4,3.25)--(0,3.25);
\node [below] at (2,3.25) {$\pi$};
\draw [big arrowhead, thin] (4,3.25)--(5,3.25);
\draw [big arrowhead, thin] (5,3.25)--(4,3.25);
\node [above] at (4.5,3.25) {$\displaystyle\frac{\pi}{4}$};
\draw [big arrowhead, thin] (0.75,0)--(0.75,4);
\draw [big arrowhead, thin] (0.75,4)--(0.75,0);
\node [right] at (0.75,2) {$\pi$};
\end{tikzpicture}
\quad
\includegraphics[width=0.49\textwidth]{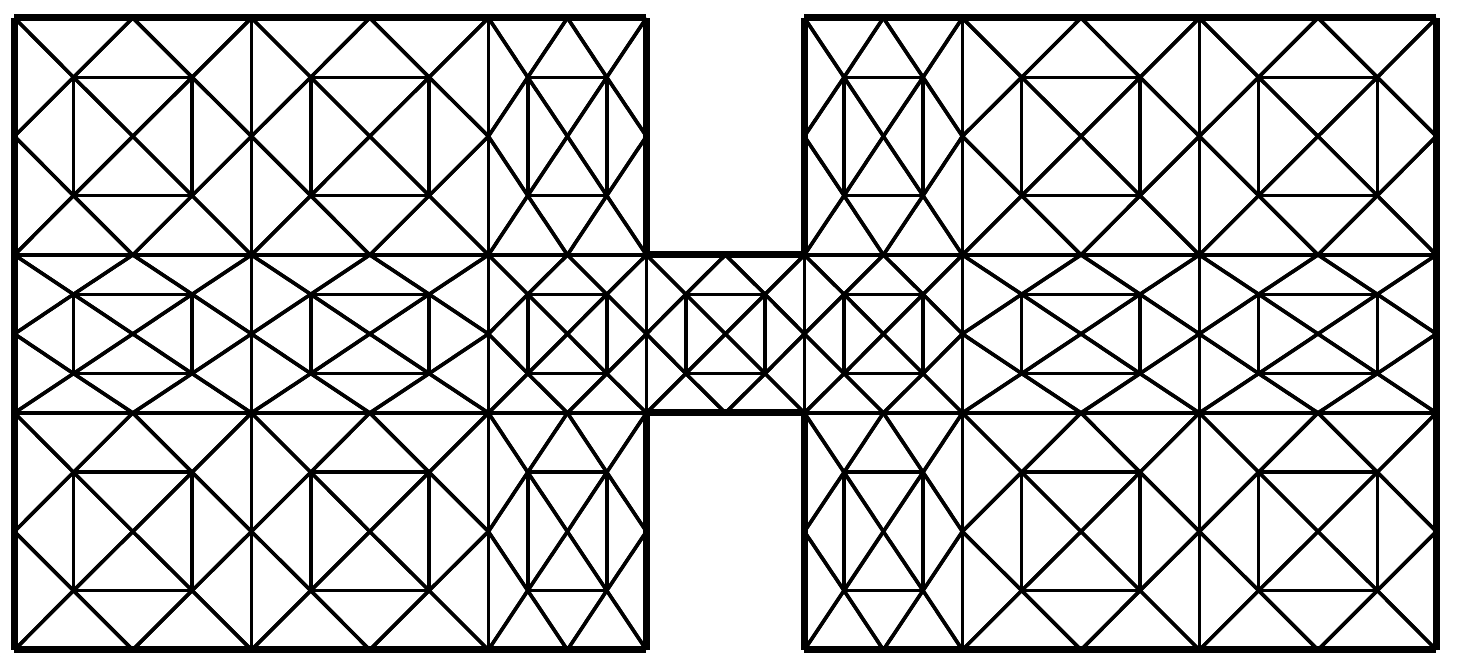}%
\caption{\label{fi:dmbl}
The dumbbell shaped domain $\Omega$ (left) and its initial triangulation (right).
}
\end{figure}

We compute the first $m=6$ eigenvalues of this problem by the standard finite element method \eqref{eq:Lapfem} and the corresponding lower bounds by the Lehmann--Goerisch method with four flux reconstructions presented in Sections~\ref{se:LGLap}--\ref{se:simpleLap}. We use Algorithm~1 described at the end of Section~\ref{se:local}.
We perform these computations on a series of uniformly refined meshes starting with the mesh depicted in Figure~\ref{fi:dmbl} (right).
The shift parameter $\gamma$ is recommended to be small \cite{BehMerPluWie2000} and we choose $\gamma = 10^{-6}$.

The \emph{a priori} known lower bound on the exact eigenvalue $\lambda_{m+1}$ is computed by using the monotonicity principle. 
We enclose the dumbbell shaped domain $\Omega$ into a rectangle $\mathcal{R} = (0,9\pi/4)\times(0,\pi)$.
The Laplace eigenvalue problem in $\mathcal{R}$ can be solved analytically and because $\Omega \subset \mathcal{R}$, the eigenvalues of the Laplacian on $\mathcal{R}$ lie below the corresponding eigenvalues on $\Omega$.
This simple approach is sufficient for the first six eigenvalues, because the seventh eigenvalue on the rectangle $\lambda_7^{(\mathcal{R})} \approx 5.778$ is still above the sixth eigenvalue for $\Omega$. This is no longer the case for higher eigenvalues, which can be verified by computing sufficiently accurate upper bounds $\Lambda_{h,i}$ by \eqref{eq:Lapfem} for the dumbbell shaped domain $\Omega$.


Numerical results below compare the global flux reconstruction \eqref{eq:sigLap1}--\eqref{eq:sigLap2} and the local flux reconstruction \eqref{eq:sigLap1loc}--\eqref{eq:sigLap2loc} with their simplified and positive definite versions \eqref{eq:sigLapB} and \eqref{eq:sigLapBloc}. Notice that we can use these simplified versions, because the Laplace eigenvalue problem satisfies condition \eqref{eq:cbeta1pos}.

%

Figure~\ref{fi:dmbl_encl1} shows the relative enclosure size \eqref{eq:relencl} for $\lambda_1$,
where the lower bound $\ell_1$ is
computed by using these four flux reconstructions. 
The left panel presents the dependence of these enclosure sizes on the mesh size $h = \max_{K\in\cT_h} \operatorname{diam} K$.
We observe that all four flux reconstructions provide virtually the same results on a given mesh. 
However, the computational performance of these approaches considerably differs. Especially the memory requirements of global flux reconstructions \eqref{eq:sig1}--\eqref{eq:sig2} and \eqref{eq:sigC} are substantially larger than the memory requirements of local flux reconstructions \eqref{eq:sig1loc}--\eqref{eq:sig2loc} and \eqref{eq:sigCloc}.
Therefore, we present in the right panel of Figure~\ref{fi:dmbl_encl1} the dependence of the same relative enclosure sizes on the number of degrees freedom. Specifically, the number of degrees of freedom for the global saddle point problem \eqref{eq:sig1}--\eqref{eq:sig2} is the dimension of $\bWh^0$ plus the dimension of $Q_h$. For the global positive definite problem \eqref{eq:sigC} it is the dimension of $\tbWh$ only, and for both local flux reconstructions it is the dimension of $V_h$.

\begin{figure}
\includegraphics[height=0.385\textwidth]{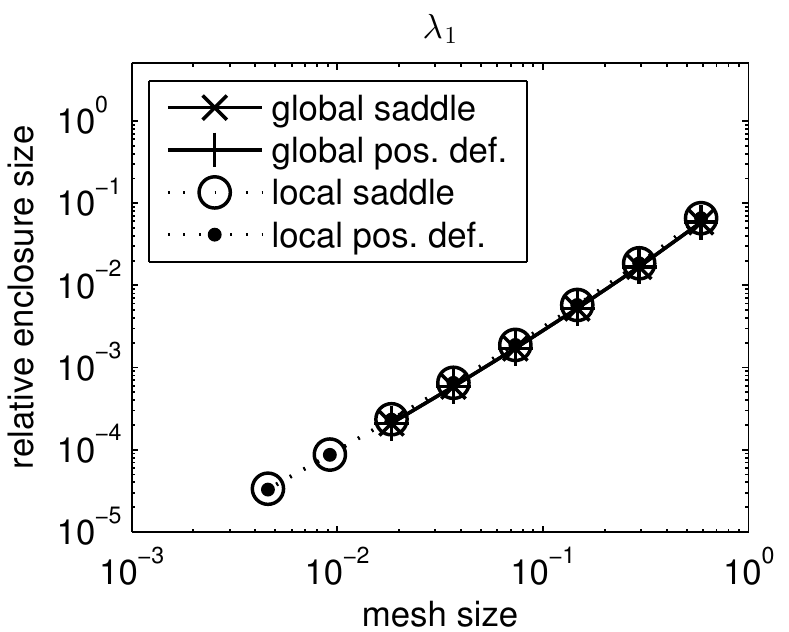}\quad%
\includegraphics[height=0.385\textwidth]{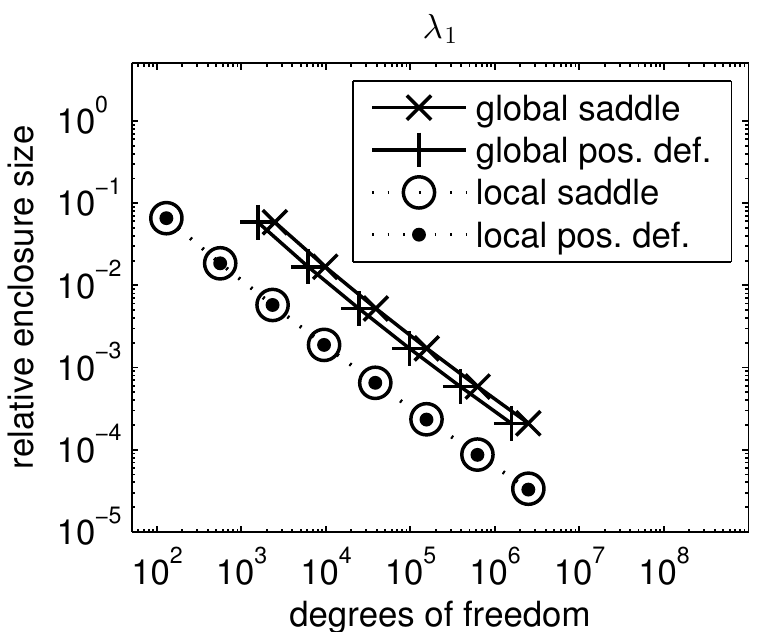}%
\caption{\label{fi:dmbl_encl1}
Dependence of the relative enclosure size $(\Lambda_{h,1} - \ell_1)/\ell_1$ on the mesh size (left) and on the number of degrees of freedom (right) for the first eigenvalue of the Laplacian on the dumbbell shaped domain. 
The four curves correspond to flux reconstructions computed by solving 
the global saddle point problem \eqref{eq:sig1}--\eqref{eq:sig2}, 
global positive definite problem \eqref{eq:sigC}, 
local saddle point problem \eqref{eq:sig1loc}--\eqref{eq:sig2loc}, 
and local positive definite problem \eqref{eq:sigCloc}.
}
\end{figure}

Concerning the higher eigenvalues,
the four flux reconstructions yield almost the same results as in the case of the first eigenvalue. For illustration we present the relative enclosure size \eqref{eq:relencl} for the fifth eigenvalue in Figure~\ref{fi:dmbl_encl5}. We emphasize that the spectral gap between $\lambda_5$ and $\lambda_6$ is extremely small for the dumbbell shaped domain and therefore the lower bound on $\lambda_5$ is less accurate than lower bounds on the other eigenvalues. In any case, the four tested flux reconstructions 
are almost identically accurate, see Figure~\ref{fi:dmbl_encl5} (left), 
and the corresponding dependence on the number of degrees of freedom 
in Figure~\ref{fi:dmbl_encl5} (right) reflects the memory requirements.

\begin{figure}
\includegraphics[height=0.385\textwidth]{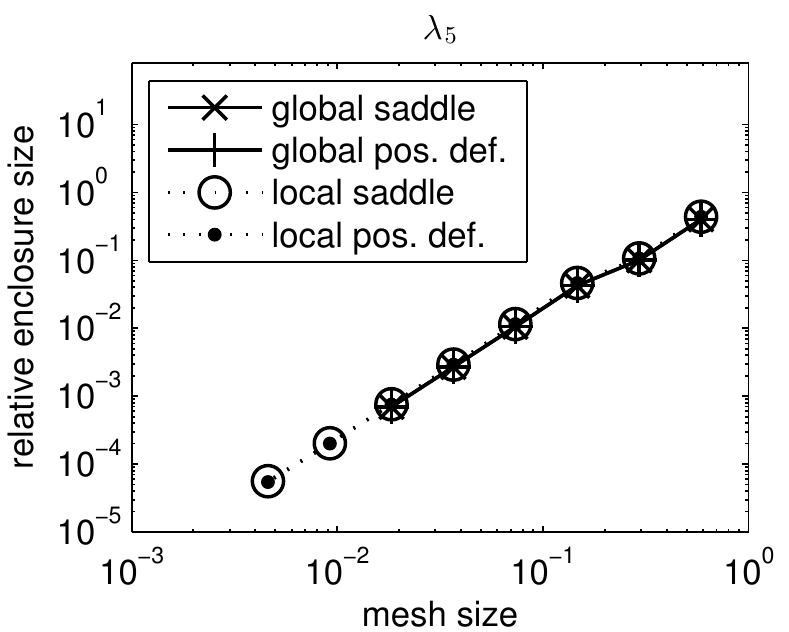}\quad%
\includegraphics[height=0.385\textwidth]{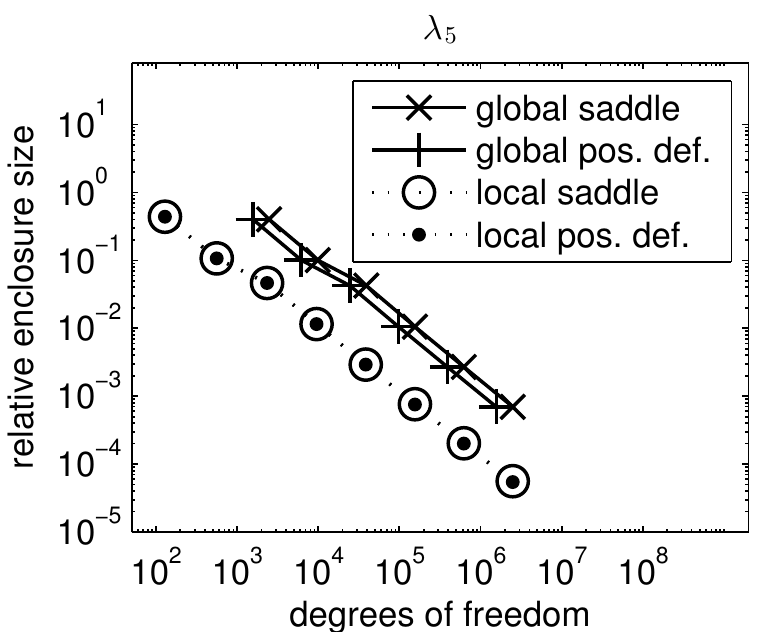}%
\caption{\label{fi:dmbl_encl5}
Dependence of the relative enclosure size $(\Lambda_{h,5} - \ell_5)/\ell_5$ on the mesh size (left) and on the number of degrees of freedom (right) for the fifth eigenvalue of the Laplacian on the dumbbell shaped domain. 
The four curves correspond to flux reconstructions computed by solving 
the global saddle point problem \eqref{eq:sig1}--\eqref{eq:sig2}, 
global positive definite problem \eqref{eq:sigC}, 
local saddle point problem \eqref{eq:sig1loc}--\eqref{eq:sig2loc}, 
and local positive definite problem \eqref{eq:sigCloc}.
}
\end{figure}

Notice that on the two finest meshes we could not solve global flux reconstruction problems, because of the lack of computer memory. In contrast, the local problems need virtually no additional memory and we can solve them even on the finest meshes.
The left panels of Figures~\ref{fi:dmbl_encl1} and \ref{fi:dmbl_encl5}
confirm that the solution of local problems does not compromise the accuracy of the resulting lower bounds.

The accuracy of the four flux reconstructions is compared in Table~\ref{ta:dumbbell}, where the corresponding lower bounds together with the finite element upper bound are listed. The presented results are computed on the six times refined uniform mesh, which was the finest mesh, where we were able to compute all four flux reconstructions. This table confirms that all flux reconstructions provide similar accuracy. The local reconstructions yield naturally less accurate lower bounds then the global reconstructions, but the differences between the lower bounds computed by local and global reconstructions represents only around 10\,\% of the resulting eigenvalue enclosures.
Nevertheless, the main advantage of local reconstructions is that they enable to refine the mesh two times more and the gain in accuracy is visible in Figures~\ref{fi:dmbl_encl1} and \ref{fi:dmbl_encl5}.
\begin{table}
\begin{tabular}{cccccc}
 & glob. saddle & glob. pos. def. & loc. saddle & loc. pos. def. & FEM \\
\hline
$\lambda_1$ & 1.955616813 & 1.955619836 & 1.955569884 & 1.955572909 & 1.956027811 \\
$\lambda_2$ & 1.960523818 & 1.960526445 & 1.960482057 & 1.960485085 & 1.960894364 \\
$\lambda_3$ & 4.793800128 & 4.793811934 & 4.792874441 & 4.792886284 & 4.801978452 \\
$\lambda_4$ & 4.823503783 & 4.823515952 & 4.822671400 & 4.822683594 & 4.830982305 \\
$\lambda_5$ & 4.993812020 & 4.993826800 & 4.993513785 & 4.993528575 & 4.997300028 \\
$\lambda_6$ & 4.993826895 & 4.993841675 & 4.993528825 & 4.993543614 & 4.997313686 \\
\end{tabular}
\caption{\label{ta:dumbbell}
Lower bounds for the Laplace eigenvalue problem in the dumbbell shaped domain computed by global saddle point problem \eqref{eq:sigLap1}--\eqref{eq:sigLap2}, global positive definite problem \eqref{eq:sigLapB}, local saddle point problem \eqref{eq:sigLap1loc}--\eqref{eq:sigLap2loc}, and local positive definite problem \eqref{eq:sigLapBloc}. The last column presents the upper bound computed by the finite element method \eqref{eq:Lapfem}.
}
\end{table}

\section{Numerical example -- Steklov-type eigenvalue problem}
\label{se:Steklov}

This section illustrates the accuracy and numerical performance of the presented flux reconstructions for a Steklov-type eigenvalue problem. We again consider the dumbbell shaped domain $\Omega$, but this time with mixed Dirichlet and Neumann boundary conditions. We consider the left-most edge of $\partial\Omega$ to be the Neumann part of the boundary $\GammaN = \{0\} \times (0,\pi)$ and the rest of the boundary to be the Dirichlet part $\GammaD = \partial\Omega \setminus \GammaN$. The Steklov-type eigenvalue problem we will solve is a special case of \eqref{eq:EPstrong} with parameters $\cA = I$, $c=0$, $\beta_1=0$ in $\Omega$ and $\alpha = 0$, $\beta_2 = 1$ on $\GammaN$. The shift parameter is chosen again as $\gamma = 10^{-6}$.

The \emph{a~priori} known lower bound can be computed by the monotonicity principle and by enclosing $\Omega$ into the same rectangle $\mathcal{R}$ as in Section~\ref{se:numex}. The Steklov-type eigenvalue problem in the rectangle $\mathcal{R}$ (with $\GammaN$ representing the Neumann part of the boundary) can be solve analytically and we have $\lambda_k^\mathcal{R} = k \coth(9k\pi/4)$, $k=1,2,\dots$. Choosing the seventh eigenvalue on the rectangle $\lambda_7^\mathcal{R} \approx 7.000$ as a guaranteed lower bound on $\lambda_7$ on the dumbbell shaped domain, we compute lower bounds on the first six eigenvalues by employing Algorithm~1.

Notice that in this setting we have $\Omega_0 = \Omega$, $\Omega_+ = \emptyset$, and condition \eqref{eq:cbeta1pos} is not satisfied. Therefore, the positive definite variants of flux reconstructions are not available and
we use flux reconstructions obtained by solving global saddle point problems \eqref{eq:sig1}--\eqref{eq:sig2}, \eqref{eq:sigB1}--\eqref{eq:sigB2}, and local saddle point problems \eqref{eq:sig1loc}--\eqref{eq:sig2loc}, \eqref{eq:sigB1loc}--\eqref{eq:sigB2loc}.

Since $c=\beta_1=0$ and $\Omega_0 = \Omega$, equations \eqref{eq:sig2} and \eqref{eq:sigB2} are identical. Thus, the only difference between the two global saddle point problems is in the handling of normal components of fluxes on $\GammaN$. Problem \eqref{eq:sig1}--\eqref{eq:sig2} considers them as essential boundary conditions incorporated in the definition of the space $\bWh$, while problem \eqref{eq:sigB1}--\eqref{eq:sigB2} enforces their correct values by the penalty method. The difference between the two local flux reconstructions is of the same nature.

Figure~\ref{fi:Stek_encl} presents the corresponding convergence curves for $\lambda_1$ and $\lambda_5$ with respect to both the mesh size and the number of degrees of freedom. 
As in the case of the Laplace eigenvalue problem, all flux reconstructions provide almost the same accuracy on a fixed mesh, see left panes of Figure~\ref{fi:Stek_encl}. However, global problems require considerably more degrees of freedom, see right panels of Figure~\ref{fi:Stek_encl}, and we are not able to solve them on the two finest meshes.

\begin{figure}
\includegraphics[height=0.385\textwidth]{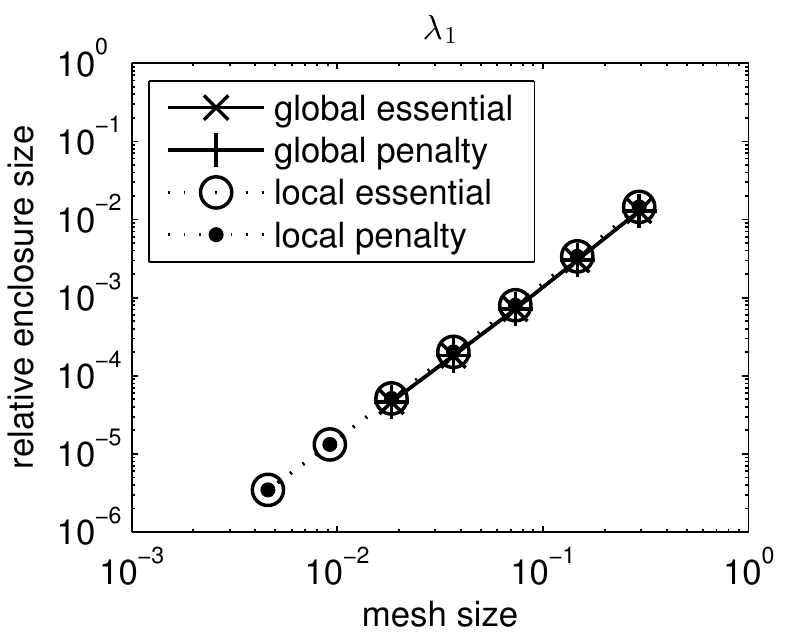}\quad%
\includegraphics[height=0.385\textwidth]{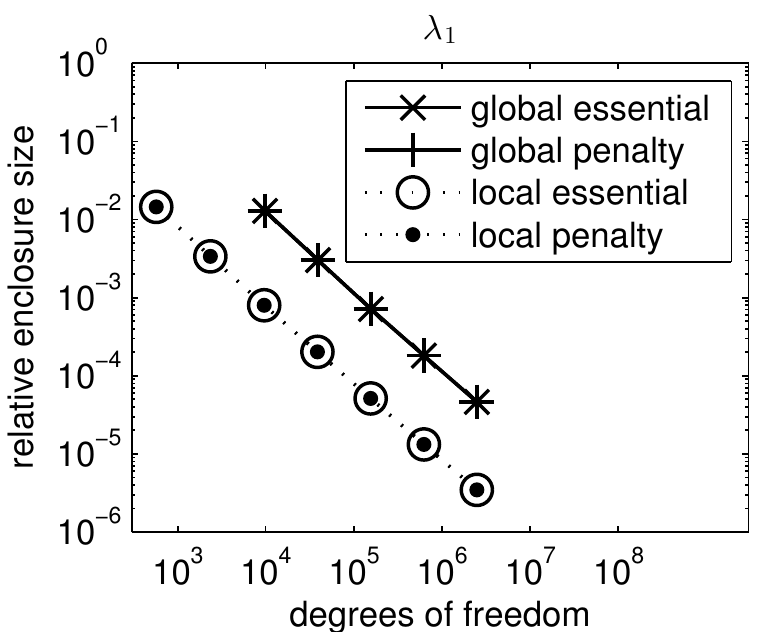}%
\\
\includegraphics[height=0.385\textwidth]{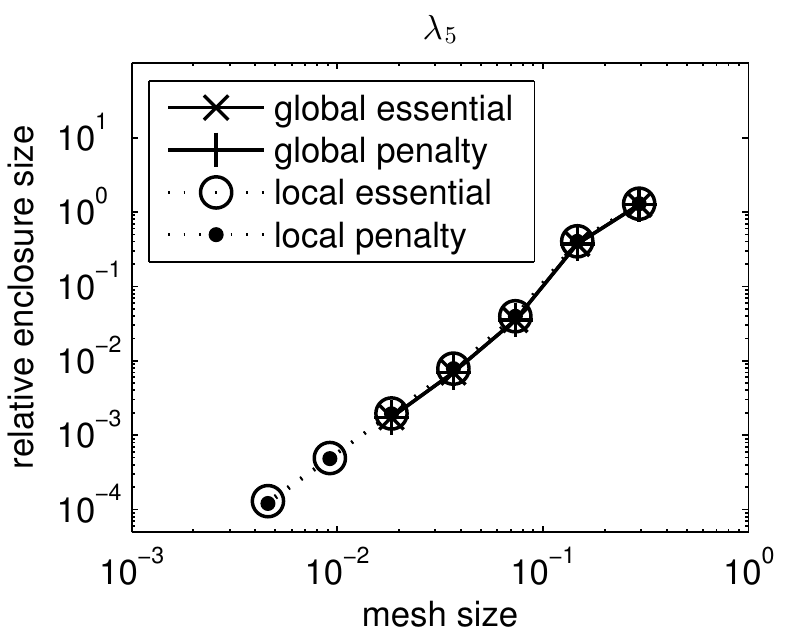}\quad%
\includegraphics[height=0.385\textwidth]{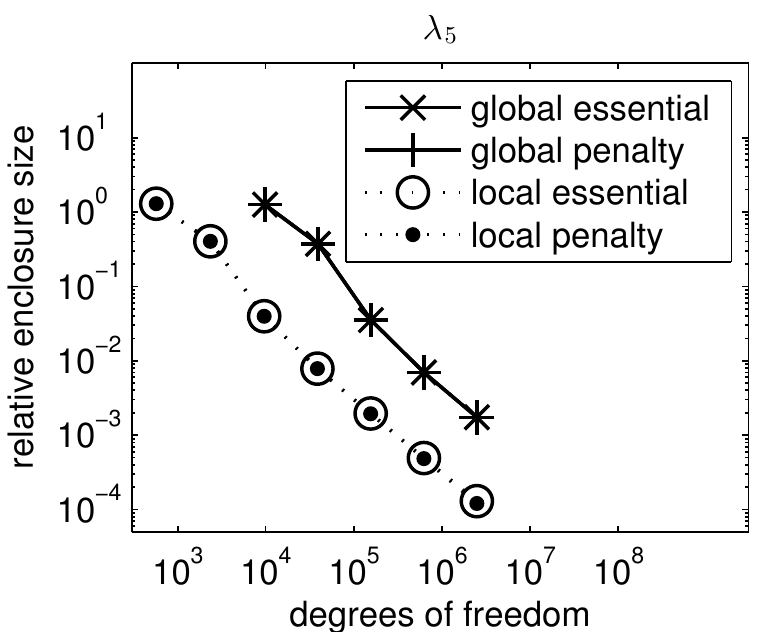}%
\caption{\label{fi:Stek_encl}
Dependence of the relative enclosure sizes $(\Lambda_{h,1} - \ell_1)/\ell_1$ (top row) and $(\Lambda_{h,5} - \ell_5)/\ell_5$ (bottom row) on the mesh size (left) and on the number of degrees of freedom (right) for the first and the fifth eigenvalue of the Steklov type eigenvalue problem on the dumbbell shaped domain. 
The four curves correspond to flux reconstructions computed by solving 
the global saddle point problem \eqref{eq:sig1}--\eqref{eq:sig2} with essential boundary conditions on $\GammaN$, 
the global saddle point problem \eqref{eq:sigB1}--\eqref{eq:sigB2} with the penalty parameter, 
local saddle point problems \eqref{eq:sig1loc}--\eqref{eq:sig2loc} with essential boundary conditions on $\GammaN$, 
and local saddle point problems \eqref{eq:sigB1loc}--\eqref{eq:sigB2loc} with the penalty parameter. 
}
\end{figure}

Table~\ref{ta:Steklov} compares lower bounds obtained by the four flux reconstructions for the first six eigenvalues as they were computed on the six times refined initial mesh.
Global flux reconstructions provide slightly more accurate lower bounds, but the difference of the lower bounds obtained by global and local reconstructions is again around 10\,\% of the size of the eigenvalue enclosure.

\begin{table}
\begin{tabular}{cccccc}
 & glob. essen. & glob. penalty & loc. essen. & loc. penalty & FEM\\
\hline
$\lambda_1$ & 1.003284998 & 1.003284998 & 1.003279585 & 1.003279585 & 1.003334201 \\
$\lambda_2$ & 1.999883355 & 1.999883355 & 1.999827309 & 1.999831448 & 2.000339499 \\
$\lambda_3$ & 2.999234430 & 2.999234430 & 2.999019928 & 2.999033731 & 3.001020719 \\
$\lambda_4$ & 3.996605934 & 3.996605934 & 3.995891502 & 3.995925975 & 4.002545124 \\
$\lambda_5$ & 4.988104630 & 4.988104630 & 4.986113993 & 4.986196556 & 5.004758449 \\
$\lambda_6$ & 5.950671350 & 5.950671350 & 5.943809247 & 5.944048237 & 6.008222917 \\
\end{tabular}
\caption{\label{ta:Steklov}
Lower bounds for the Steklov type eigenvalue problem in the dumbbell shaped domain computed by the global flux reconstruction \eqref{eq:sig1}--\eqref{eq:sig2} with essential boundary conditions on $\GammaN$, global reconstruction \eqref{eq:sigB1}--\eqref{eq:sigB2} with the penalty parameter, local reconstruction \eqref{eq:sig1loc}--\eqref{eq:sig2loc} with essential boundary conditions on $\GammaN$, and local reconstruction \eqref{eq:sigB1loc}--\eqref{eq:sigB2loc} with the penalty parameter. The last column presents the upper bound computed by the finite element method \eqref{eq:fem}. 
}

\end{table}


\section{Conclusions}
\label{se:concl}

In this paper we propose alternative approaches for computing flux reconstructions in the Lehmann--Goerisch method. These alternative approaches are less computationally demanding and provide almost as accurate results as the traditional global approach. Flux reconstruction \eqref{eq:sigC} can be recommended for small problems, because it is simpler to implement and less computationally demanding than the traditional saddle point problem \eqref{eq:sig1}--\eqref{eq:sig2}. However, for large scale problems the local flux reconstructions are recommended, because the resulting local problems are independent and can be easily solved in parallel. Flux reconstruction \eqref{eq:sigCloc} is especially advantageous, because it requires to solve just a simple positive definite problem by standard Raviart--Thomas finite elements.

Let us mention that the presented approach is applicable to the general eigenvalue problem \eqref{eq:EPstrong} in arbitrary dimension, with variable coefficients, and mixed boundary conditions. For technical reasons connected with the specific flux reconstructions we assumed piecewise constant coefficients, however, the general idea is applicable even in the case of more general coefficients. Additional advantage of the presented approach is its suitability for generalizations to higher order approximations. Further, this approach can be well combined with mesh adaptivity and presented flux reconstructions can be used to compute local error indicators for mesh refinement.

From a wider perspective, this paper shows that the local and efficient flux reconstructions developed in the last decade for boundary value problems can be utilized in the Lehmann--Goerisch method in order to efficiently compute accurate lower bounds on eigenvalues. Current progress in constructing efficient flux reconstructions for more complex problems such as linear and nonlinear elasticity \cite{BerMolSta2017} promises their future utilization in corresponding eigenvalue problems for computing accurate lower bounds on eigenvalues.

%
%
%
%
%

\section*{Acknowledgement}
The author would like to thank anonymous referees for constructive comments that yield to substantial improvements of the paper.
Further the author gratefully acknowledges the support of Neuron Fund for Support of Science, project no.~24/2016
and the institutional support RVO~67985840.

%
%


\bibliographystyle{amsplain} 
\bibliography{bibl}

\end{document}